\definecolor{codegreen}{rgb}{0,0.6,0}
\definecolor{codegray}{rgb}{0.5,0.5,0.5}
\definecolor{codepurple}{rgb}{0.58,0,0.82}
\lstdefinestyle{mystyle}{
	basicstyle=\tiny,
	%backgroundcolor=\color{backcolour},   
	commentstyle=\color{codegreen},
	keywordstyle=\color{blue},
	numberstyle=\tiny\color{codegray},
	stringstyle=\color{codepurple},
	breakatwhitespace=false,         
	breaklines=true,                 
	captionpos=b,                    
	keepspaces=true,                 
	numbers=left,                    
	numbersep=5pt,                  
	showspaces=false,                
	showstringspaces=false,
	showtabs=false,                  
	tabsize=2,
	%aboveskip=2em,
	%belowskip=2em
}
\newcommand{\R}{\mathbb{R}}
\newcommand{\pa}{\partial}
\newcommand{\ve}{\varepsilon}
\newcommand{\vp}{\varphi}
\newcommand{\md}{\,\mathrm{d}}
\newcommand{\supp}{\operatorname{supp}}
\newcommand{\m}{\mathcal{M}}
\newtheorem{theorem}{Theorem}
\newtheorem{lemma}[theorem]{Lemma}
\theoremstyle{definition}
\newtheorem{rem}{Remark}
\newcommandx{\unsure}[2][1=]{\todo[linecolor=red,backgroundcolor=red!25,bordercolor=red,#1]{#2}}
\newcommandx{\change}[2][1=]{\todo[linecolor=blue,backgroundcolor=blue!25,bordercolor=blue,#1]{#2}}
\newcommandx{\info}[2][1=]{\todo[linecolor=green,backgroundcolor=green!25,bordercolor=green,#1]{#2}}
\newcommandx{\improvement}[2][1=]{\todo[linecolor=yellow,backgroundcolor=yellow!25,bordercolor=yellow,#1]{#2}}
\newcommandx{\biblio}[2][1=]{\todo[linecolor=blue,backgroundcolor=magenta!25,bordercolor=blue,#1]{#2}}
\newcommandx{\laura}[2][1=]{\todo[linecolor=violet,backgroundcolor=violet!25,bordercolor=violet,#1]{#2}}
\newcommandx{\michi}[2][1=]{\todo[linecolor=purple,backgroundcolor=red!40,bordercolor=purple,#1]{#2}}
\begin{document}
	\title{Structured Model Conserving Biomass for the Size-spectrum Evolution in Aquatic Ecosystems}
	\author{L. Kanzler\thanks{CEREMADE, Université Paris Dauphine, Place du Maréchal de Lattre de
Tassigny, F-75775 Paris Cedex 16, France.\tt{laura.kanzler@dauphine.psl.eu}} \and B. Perthame\thanks{Sorbonne Université, CNRS, Université de Paris Cit\'e, Inria, Laboratoire Jacques-Louis Lions, 75005 Paris, France. \tt{benoit.Perthame@sorbonne-universite.fr}} \and B. Sarels\thanks{Sorbonne Université,  CNRS, Université de Paris Cit\'e, Laboratoire Jacques-Louis Lions, 75005 Paris, France. \tt{benoit.sarels@sorbonne-universite.fr}}}
	\date{\vspace{-5ex}}

\maketitle

\begin{abstract}
Mathematical modelling of the evolution of the \emph{size-spectrum dynamics in aquatic ecosystems} was discovered to be a powerful tool to have a deeper insight into impacts of human- and environmental driven changes on the marine ecosystem. In this article we propose to investigate such dynamics by formulating and investigating a suitable model. The underlying process for these dynamics is given by predation events, causing both growth and death of individuals, while keeping the total biomass within the ecosystem constant. The main governing equation investigated is deterministic and non-local of quadratic type, coming from binary interactions. Predation is assumed to strongly depend on the ratio between a predator and its prey, which is distributed around a preferred feeding preference value. Existence of solutions is shown in dependence of the choice of the feeding preference function as well as the choice of the search exponent, a constant influencing the average volume in water an individual has to search until it finds prey. The equation admits a trivial steady state representing a died out ecosystem, as well as - depending on the parameter-regime - steady states with gaps in the size spectrum, giving evidence to the well known \emph{cascade effect}. The question of stability of these equilibria is considered, showing convergence to the trivial steady state in a certain range of parameters. These analytical observations are underlined by numerical simulations, with additionally exhibiting convergence to the non-trivial equilibrium for specific ranges of parameters. 
\end{abstract}

\begin{keywords}
		Aquatic ecosystem, size-spectrum model, predator-prey, convergence to equilibrium, cascade-effect
\end{keywords}

\medskip
	
	\textbf{\textit{Mathematics subject classification:}} 92D40, 45K05, 92C15
		
\medskip

\section{Introduction}\label{s:intro}

Understanding the size-spectrum dynamics in aquatic ecosystems is an active field of ecology. For that purpose, several types of models have been proposed to describe the underlying processes based on predation events. These lead to equations depicting growth and decay of interacting populations structured by the average size of individuals. We modify one of the most popular and recent models \cite{DDL} to include the production of individuals of ’small’ sizes so as to keep the total biomass constant. The resulting model, which we aim to investigate within this work, describes the time-evolution of the distribution function $f(w,t)$, encoding the abundance of individuals with body size/weight $w \in \R_+ = (0,\infty)$ at time $t \geq 0$ in the ecosystem. The model is of \emph{kinetic collisional-type}, spatially homogeneous, structurally similar to \emph{kinetic equations for coalescing particles} \cite{S} and the underlying individual-based dynamics are given by binary predation events modelled by the following \emph{process}: A predator feeds on a prey, resulting instantaneously in growth of the predator as well as occurrence of a certain amount of small individuals ('offspring'). Main matter of investigation will be an equation of the form 
\begin{equation}\label{eq:e:BEpdelta}
	\begin{split}
		\pa_t f(w) &= \int_0^{\frac{w}{K}} k(w-Kw',w') f(w-Kw') f(w') \, \md w' + \frac{1-K}{K'^2} \int_0^{\infty} k\left(w',\frac{w}{K'}\right) f\left(\frac{w}{K'}\right) f(w') \,\md w' \\
		&-\int_0^{\infty} \left(k(w,w')+k(w',w)\right) f(w')f(w) \, \md w',
	\end{split}
\end{equation}
being a special case of a class of such models, more precisely introduced in Section \ref{s:model}. The frequency of predation events described by \eqref{eq:e:BEpdelta} is quantified by the \emph{feeding kernel} $k(\cdot,\cdot)$, where the first variable stands for the predator weight, while the second always holds the prey weight. Following \cite{BR}, typical feeding kernels strongly depend on the ratio between the weights of predator and prey and are of the form 
\begin{align}\label{g:kernel}
	k(w,w')=Aw^{\alpha}s\left(\frac{w}{w'}\right).
\end{align}
The quantity $Aw^{\alpha}$ describes the volume searched per unit time by an individual with size $w$, which is modelled as an \emph{allometric function} of the animal weight \cite{B,K,W}. Hence, the constant $A>0$ denotes the search volume per unit mass$^{-\alpha}$ and per unit time, while $\alpha>0$ encodes the \emph{search exponent}. The first can be seen as a time-scaling with little impact on the qualitative dynamics when of order 1, which makes it reasonable to set $A=1$. The function $s: \R_+ \to \R_+$ is the so-called \emph{feeding preference} function, encoding the preferred prey size proportional to the predator body weight. After such a predation event the predating individual assimilates a certain, usually very small, fraction of its prey weight, given by the \emph{assimilation constant} $K>0$, in literature also known as \emph{Lindeman efficiency} \cite{B}. At the same time a certain amount of 'offspring' is produced, i.e. individuals having a fraction $K'$, with $K' \ll 1$, of the prey weight. The main novelty of our proposed equation compared to the model introduced in \cite{DDL} is the second term on the right-hand-side of \eqref{eq:e:BEpdelta}, being responsible for the gain of very small individuals, as side product of a predation, which causes an inflow close to zero. Moreover, the dynamics of this model will not cause the ecosystem to die out due to a decreasing pool of possible prey. Having in mind the modelling of rather small, closed-up ecosystems, this further goes along with the biologically reasonable assumption of conservation of total biomass of the whole system. 

\vspace{1cm}

Trophic interactions between animals in the ocean have been a matter of interest since the '60s with the first size-measurements of individuals taken and investigated by Paloheimo et al. \cite{PD}, Hairston et al. \cite{HSS} and Sheldon et al. in \cite{SSP,SPS}. The individual body size was discovered to be the ’master trait’ in food webs of animals \cite{E}, giving rise to emergent distributions of biomass, abundance and production of organisms. In fact, in marine and freshwater ecosystems it was conjectured \cite{BD,SPS,SSP} that by treating individuals as particles with states given by their size, equal intervals of \emph{biomass} (i.e. body weight $\times$ abundance) 
\begin{align*}
	M(w) = w f(w)
\end{align*}
in logarithmic intervals of the organism body weight are observed to approximately contain equal amounts of biomass per unit volume. This phenomenon is today known as the \emph{Sheldon conjecture} \cite{SPS} and is equivalent to the biomass function $M$ having slope -1 in logarithmic scales. Mathematically speaking, for biomass in weight range $[w_1,w_2]$, the following should hold true
\begin{align*}
	\ln{\left(\frac{w_2}{w_1}\right)} = \int_{w_1}^{w_2} M(w) \, \md w,
\end{align*}
or equivalently by denoting $x := \ln(w)$,
\begin{align*}
	\ln{(M(e^x))}=- x.
\end{align*}
Hence, biomass density decreases approximately as the inverse of body mass. While this \emph{power-law relation} seems to be a good approximation for large ecosystems including a huge range of trophic levels in a wide geographic area \cite{SPS}, different studies dedicated to smaller scaled ecosystems very often reveal the occurrence of \emph{dome-patterns} \cite{Eetal, HSS, RGK}. These phenomena, also known as \emph{cascade effects}, describes the suppression of specific trophic levels in the ecosystem as result of an indirect influence from one trophic level to the second next lower or higher. The graph of the corresponding size spectrum function will then have intervals in which it is zero, with each such gap indicating the absence of specific trophic levels. 

Based on this important observations from the '60s and '70s, size-based ecosystem modelling was discovered as a powerful tool to have a deeper insight into impacts of human- and environment-driven changes on the marine ecosystem, giving rise to a variety of models to capture this phenomenon. While these proposed mathematical models describing mass spectra at large scales in aquatic ecosystems seem very simple, in fact they pose deep mathematical questions, which reflects the high computational cost to solve them numerically and the complex patterns of solutions which may confirm hypotheses by ecologists.

Size-spectrum models (SSMs) have been developed starting with Silvert and Platt \cite{SP, SP2}, followed by Benoît and Rochet \cite{BR}, Andersen and Beyer \cite{AB, AB2}, Capitán and Delius \cite{CD}, Datta et al. \cite{DDL, DDLP} and Cuesta, Delius and Law \cite{CDL}. A common feature is that the body weights change due to interactions between organisms at different sizes. Individuals grow by feeding on and killing smaller organisms, thus connecting the two opposing effects of predation: death of the prey, and body growth of the predator.  A common feature of these models is the allometric scaling of the rates of the different processes. This scales back to observations in the '30s by Huxley \cite{H}, who stated that most size-related variations of individual characteristics can be expressed as power-law functions of the body mass. Especially, shown by Kleiber \cite{K}, the metabolic rate follows such a power-law with exponent 3/4, see also \cite{B}. For a broad overview of size-spectrum models developed so far we refer the reader to \cite{BHETR}, giving an historical scope over the evolution in this field including a wide range of existing models as well as their connections. On the mathematical level, these dynamics are expressed in terms of partial differential equations (PDEs), whose structures are similar to several equations recently used in mathematical biology \cite{P}. In \cite{SP,SP2} the authors propose such a \emph{McKendrick-von Foerster equation} \cite{M, F} as model with growth and mortality to be functions of body mass, coupled by the aforementioned predation events. This was specified in later works, in which the predation was restricted to organisms of smaller size \cite{CD} before in \cite{BR} the authors introduced the feeding kernel giving a more precise description of the feeding behaviour within the ecosystem. In more recent models \cite{DDL, DDLP} the authors aim to overcome the discrepancy that the evolution of an organism body weight does not follow the same rules as ageing of individuals. Indeed, growth in size is heavily influenced by interaction with other individuals (i.e. by predation), while ageing happens uniformly in the population without necessity of interactions. The authors hence proposed a \emph{jump-growth} model encoding these predation events within integral terms of quadratic order, where the aforementioned age-structured \emph{McKendrick–von Foerster equation} can be recovered as a first order approximation in a specific parameter-regime involving a very small biomass assimilation constant. Numerical experiments in \cite{BR} as well as in \cite{DDL} were performed. In both cases the dynamics within individuals below (e.g. phytoplankton and lower) and above (e.g. top carnivores) certain weight thresholds are assumed to be governed by simpler dynamics, as, for example, relaxation equations with an equilibrium reservoir of organisms in this trophic level. This is especially necessary to overcome the lack of inflow of mass, since predation just results in growth of one species, while a huge amount of the prey mass is lost. Simulations showed the occurrence of travelling waves and oscillatory solutions, indicating already the high instability and hence sensitivity of the power-law steady state to perturbations. Moreover, in \cite{DDLP} the stability of the power-law equilibrium was analytically investigated using tools from spectral analysis in the case of the \emph{McKendrick–von Foerster equation}, the \emph{jump-growth} model introduced in \cite{DDL} as well as the McKendrick–von Foerster type equations with an additional diffusion-term, giving evidence to its instability in almost all reasonable parameter-regimes. Indeed, especially on smaller scaled ecosystems, rather \emph{dome patterns} in the size-spectrum function can be observed \cite{RGK}, giving evidence to the so-called \emph{cascade-effect} \cite{Eetal, HSS} within ecosystems. 

Therefore, we propose a model with the biologically very reasonable assumption of conservation of the total biomass of the ecosystem under investigation. This is realised by a gain of small individuals as byproduct of each predation event. This conserved quantity, mathematically interpreted as the first moment of the distribution function $f$, gives additional structure needed for rigorous analytical investigation of the proposed equation. Moreover we are able to capture the widely observed \emph{cascade effect} by carefully chosen feeding interaction functions, indicating its occurrence or absence of specific trophic levels in the ecosystem. 

The underlying microscopic dynamics of \eqref{eq:e:BEpdelta} are given by binary, instantaneous interactions, which bears some resemblance with a variety of binary collisional models in the field of \emph{kinetic theory}, starting with the \emph{Boltzmann equation} as the most famous example \cite{CIP, V}. Predation, the type of binary interaction considered in our model, can be interpreted as coagulation with loss of mass in the context of coalescing particles. Structural similarity to the \emph{Smoluchowski coagulation equation} \cite{S} as well as to further \emph{coagulation equations}, matter of recent studies \cite{BKBSHSS, CR, ELM, EP, GL1, GL2, NV, Sr}, follows naturally. We shall make use of some techniques arising from these fields of research when deriving analytical properties of the models: in particular control over characteristic moments, fixed-point combined with compactness arguments as well as entropy-and entropy dissipation techniques, especially characteristic for such structured equations in mathematical biology and physics \cite{P, RVG}. 

The article is organised as follows: In Section \ref{s:model} we introduce and motivate our general size-spectrum model for aquatic ecosystems, while embedding in existing literature, showing similarities and crucial differences. Moreover, for a certain case, main focus of our analysis, formal properties regarding control of moments are derived. The question of existence and uniqueness is investigated in the subsequent Section \ref{s:existence}. Section \ref{s:longtime} is dedicated to the study of existence and admissibility of steady states, as well as convergence towards them regarding the parameter-regime. Finally in Section \ref{s:num} we give evidence to these analytical observations by providing numerical simulations underlying results and conjectures stated in the previous Section \ref{s:longtime}.

\section{A Jump-Growth Model for Size-spectrum in Aquatic Ecosystems}\label{s:model}

We introduce a new class of models for aquatic ecosystems, which contains production of small individuals ('offspring') and, hence, conserves the total biomass. Following \cite{DDL} we assume that the underlying individual-based mechanism is given by a \emph{Markov process}, describing binary predation events of single organisms in the ecosystem, which are modelled in the following way: Predation is assumed to happen in an instant, followed by jumps in the size of the predator and, different to the model in \cite{DDL}, production of a certain amount of very small sized organisms, for simplicity called 'offspring' throughout this article, which are nourishing the ecosystem at the smallest size class. This modelling choice aims to capture the phenomenon observed in ecosystems, especially in rather closed-up and, hence, self-contained environments, in which all the available materials are re-utilised through the establishment of continuous cycles. To be more precise, a predator with body-weight $w \in \R_+$ feeds on a prey with weight $w’ \in \R_+$. The predator is able to assimilate a fraction $Kw'$ of the body mass of its prey. This implies that the predator post-feeding state is given by $w+Kw’$. The assimilation constant $K$ is assumed to be small, i.e. $0<K<1$, inspired by insights regarding metabolic theory of ecology \cite{AB, AB2, B}, widely used by ecologists. Furthermore, we assume that the body mass of the prey, which cannot be assimilated by the predator, will produce a certain amount $P>0$ of 'offspring'. Therefore, we introduce the waste-to-nutrient density $p(w,w')$, which encodes the probability that within a predation event, where an individual with weight $w'$ is eaten, 'offspring' of size $w$ is produced. We assume that $p(w,\cdot)$ is a probability density function, i.e.
$$
\int_0^{\infty} p(w,w') \, \md w' =1, \quad \forall w \in [0,\infty).
$$
A predation event is assumed to happen with a certain rate given by the \emph{feeding kernel} $k(\cdot,\cdot)$ depending on the prey and predator weights \eqref{g:kernel}, naturally asymmetric in its two arguments.

The associated evolution equation for the distribution function $f(w,t), w \in \R_+, t\geq0$ of the individuals in the ecosystem, is of collisional-type, spatially homogeneous and reads as
\begin{equation}\label{e:BEp}
	\begin{split}
		\pa_t f(w) =& \int_0^{\frac{w}{K}} k(w-Kw',w') f(w-Kw') f(w') \, \md w' \\
		+& P \int_0^{\infty} \int_0^{\infty} p(w,w'')k(w',w'') f(w'') f(w')\, \md w'' \,\md w' \\
		-&\int_0^{\infty} \left(k(w,w')+k(w',w)\right) f(w')f(w) \, \md w'.
	\end{split}
\end{equation}
The two loss terms on the right-hand-side encode that within a predation event in the ecosystem two individuals are lost and enter the dynamics again with post-predation states, given by the two gain terms. The first gain term encodes the addition of an individual which grew after eating its prey. The second gain term on the other hand describes the gain of 'offspring' as a side-product induced by the remaining body mass, which could not be assimilated by the predating organism. We want to point out that the appearance of the two loss terms comes from the asymmetry of the feeding kernel. Equation \eqref{e:BEp} bears some strong structural similarity to the \emph{collision-induced breakage equation}, which is used to model formation of raindrops as well as planetesimals. Several important contributions to the understanding to such types of equations can be found over the last four decades from a physical point of view starting with \cite{Sr}, for a model describing the size-distribution of raindrops, followed by \cite{CR} for a general model for irreversible fragmentation and \cite{BKBSHSS} with a study of size distribution of particles in Saturn’s rings, which is also governed by coagulation and fragmentation processes. Moreover, see \cite{EP, GL1, GL2} for more recent theoretical findings regarding this type of equation.

The \emph{weak formulation} of \eqref{e:BEp} obtained by multiplication by a suitable test-function $\vp: \R_+ \to \R_+$ before integrating over the state space $\R_+$ is given by 
\begin{equation}\label{e:weakBEp}
	\begin{split}
		&\frac{\md}{\md t} \int_0^{\infty} f(w) \vp(w) \, \md w \\
		=& \int_0^{\infty} \int_0^{\infty} \left(\vp\left(w+Kw'\right)+ P \int_0^{\infty} \vp(w'') p(w'',w') \, \md w'' - \vp(w) - \vp(w')\right)k(w,w') f(w) f(w') \, \md w' \, \md w.
	\end{split}
\end{equation}
For having a rigorous justification for the derivation of the equation for evolution of the distribution function for the microscopic dynamics, we again refer the reader to \cite{DDL}, where this topic was discussed for their model with very similar structure. Moreover, we would like to point out more precisely the structural similarity to the \emph{collision-induced breakage equation} investigated in \cite{GL1}. The differences lie, on the one hand, in the asymmetry of the feeding kernel $k(\cdot,\cdot)$, which is assumed to be symmetric in the collision-induced breakage setting. On the other hand, the equation in \cite{GL1} features the possibility of aggregation as well as breakup induced by a collision of two particles, while the model \eqref{e:BEp} just allows for the latter, since after each predation an amount of 'offspring' is produced. Further, different to the equation in \cite{GL1}, the function describing the daughter distribution after the breakup is symmetric, while this would not be reasonable in the context of predation behaviour.

We aim to model an ecosystem where the \emph{total biomass} of the ecosystem 
\begin{align}\label{d:biomass}
	\m:= \int_0^\infty w f(w,t) \, \md w
\end{align}
is formally conserved. This asks for conservation of biomass within each predation event, which results in the restriction to choose $P$ and $p$, fulfilling the relation
\begin{align*}
	P \int_0^{\infty} w'' p(w'',w') \, \md w'' = (1-K)w', \quad \forall w' \in [0,\infty),
\end{align*}
which can be seen by the weak formulation \eqref{e:weakBEp} with the choice $\vp(w)=w$. It is, however, well-investigated that for equations similar to the Smoluchowski coagulation equation \cite{S} a phenomenon known as \emph{gelation} can occur, which describes the loss of mass at finite time, associated with the blow-up of the first moment, indicating formation of an infinite cluster of particles. This will be further discussed in Section \ref{ss:properties}.

\begin{rem}
The quantity 
$$
m_p(w'):=\int_0^{\infty} w'' p(w'',w') \, \md w''
$$
describes the mean 'offspring'-size, which appears after an individual of weight $w'$ is eaten. We make the reasonable assumption that $\frac{m_p(w')}{w'} = \frac{1-K}{P}w' \ll 1$, hence $P \gg 1$ has to hold for all $w' \in (0,\infty)$. 
\end{rem}

\subsection{Deterministic Jump-Growth Model with Offspring-production}\label{ss:dmodel}

A very specific choice for the waste-to-nutrient density $p(\cdot,\cdot)$ would be 
$$
p(w,w') = \delta_{\frac{w}{K'}}(w') = K' \delta_{K'w'}(w),
$$
for positive very small constant $0<  K' \ll 1$, which means that after each predation event the ecosystem is nourished with particles, whose size is given by a very small fraction of the size of the consumed prey. Hence, the stochasticity in the dynamics on the level of the distribution function is completely lost, leaving a purely deterministic model on which we will focus in the following. Moreover, in that case we are able to determine the constant $P$ explicitly, namely $P=\frac{1-K}{K'^2}$. A visualisation of the underlying \emph{jump-growth process} can be found in Figure \ref{f:jump_growth}.
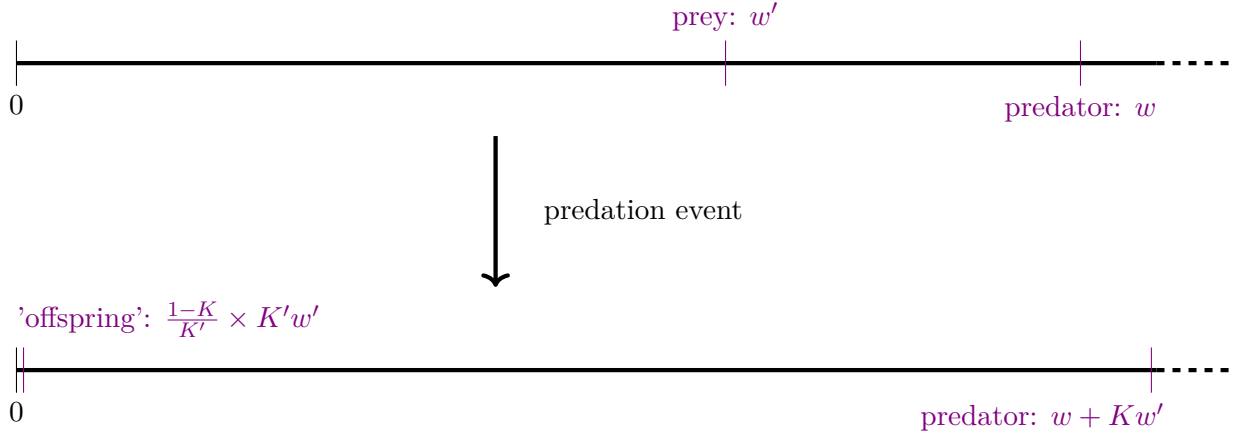
\begin{figure}
	\centering
	\begin{tikzpicture}
		\draw[ultra thick, -] (0,0) -- (15,0);
		\draw[ultra thick, dashed] (15,0) -- (16,0);
		\draw[-] (0,0.3) -- (0,-0.3) node[anchor=south, below]{0};
		\draw[violet, -] (9.33,0.3)node[anchor=north, above]{prey: $w'$} -- (9.33,-0.3);
		\draw[violet, -] (14,0.3) -- (14,-0.3) node[anchor=south, below]{predator: $w$};
	\end{tikzpicture}
	\begin{tikzpicture}
		\draw[ultra thick, - >] (9,1) to (9,-1);
		\node at (9.5,0) [anchor=east, right]{predation event};
	\end{tikzpicture}
	\begin{tikzpicture}
		%\draw [ultra thick, - > ] (6.5,1.5)  to [bend left=45] (8.5,1.5);
		%\node at (7.5,2) [anchor=north, above]{predation event};
		\draw[ultra thick, -] (0,0) -- (15,0);
		\draw[ultra thick, dashed] (15,0) -- (16,0);
		\draw[-] (0,0.3) -- (0,-0.3) node[anchor=south, below]{0};
		\draw[violet, -] (0.0933,0.3) -- (0.0933,-0.3); %node[anchor=west, above]{plankton: $\frac{1-K}{K'} \times K'w'$} 
		\node at (2,0.3) [anchor=north, above, violet]{'offspring': $\frac{1-K}{K'} \times K'w'$};
		\draw[violet, ] (14.933,0.3) -- (14.933,-0.3); %node[anchor=south, below]{predator: $w+Kw'$};
		\node at (13.5,-0.3) [anchor=south, below, violet]{predator: $w+Kw'$};
	\end{tikzpicture}
	\caption{Visualisation of the underlying individual based interaction rules, governing the dynamics encoded by equation \eqref{e:PJG}. A predator with weight $w$ grows and jumps to state $w+Kw'$, where $w'$ is the prey weight, while producing an amount $\frac{1-K}{K'}$ of 'offspring' of size $K'w'$ as a side-product of the biomass, which could not be assimilated by the predator.} %The value of the \emph{assimilation constant} was chosen as $K=0.1$ and the value of the \emph{plankton production constant} as $K'=0.01$.}
	\label{f:jump_growth}
\end{figure}

Under these considerations and additionally assuming that the feeding kernel is of the form \eqref{g:kernel} the model becomes
\begin{align}\label{e:PJG}
	\pa_t f(w) &= Q(f,f)=G_1(f,f)+G_2(f,f) - L_1(f,f)-L_2(f,f)\notag  \\
	:=&\int_0^{\frac{w}{K}} \left(w-Kw'\right)^{\alpha} s\left(\frac{w-Kw'}{w'}\right) f(w-Kw') f(w') \, \md w' \notag \\
	\phantom{=}+& \frac{1-K}{K'^{2}} \int_0^{\infty} w'^{\alpha} s\left(\frac{w'K'}{w}\right)f\left(\frac{w}{K'}\right) f(w') \,\md w' \\
	\phantom{=}-& \int_0^{\infty} \left(w^{\alpha} s\left(\frac{w}{w'}\right)+w'^{\alpha} s\left(\frac{w'}{w}\right) \right) f(w')f(w) \, \md w', \notag
\end{align}
which equipped with initial conditions
\begin{align}\label{IC_PJG}
	f(w,0)=f_0(w), \quad w \in \R_+
\end{align}
is the matter of investigation in the remainder of this article. 

Inspired by the choice of  $s(\cdot)$ we perform the coordinate change $w' \to r:=w_{predator}/w_{prey}$ in every term of \eqref{e:PJG} to obtain 
\begin{align}\label{e:PJG_r}
	\pa_t f(w) &= w^{\alpha +1}\int_0^\infty s(r) \left[r^{\alpha}(r+K)^{-\alpha-2} f\left(\frac{wr}{r+K}\right)f\left(\frac{w}{r+K}\right) \right. \\
	&\left.+ r^{\alpha}(1-K)K'^{-3-\alpha} f\left(\frac{w}{K'}\right)f\left(\frac{wr}{K'}\right) -r^{-2} f(w) f\left(\frac{w}{r}\right) - r^{\alpha} f(w)f(rw) \right] \, \md r \notag
\end{align}
which allows a clearer vision of the influence of the feeding ratio $r \in [0,\infty)$ on the dynamics.

\paragraph{The Choice of the feeding preference function:}

Since, in this model approach, the evolution of the aquatic system is only driven by predation with the body size being the decisive trait, the choice of the \emph{feeding preference function} $s:\R_+\to\R_+$ has essential influence on the dynamics in the ecosystem. In this article we consider two possibilities. On the one hand, we assume that the feeding ratio $r$ of predator and prey is drawn by a Gaußian distribution
\begin{align}\label{d:sgauss}
	s(r):=\frac{1}{\sigma\sqrt{2\pi}}\exp\left({-\frac{(r-B)^2}{2\sigma^2}}\right),
\end{align}
already used in a similar manner in existing models as in \cite{BR,DDL,DDLP}. On the other hand, we work with 
\begin{align}\label{d:scomp}
	s(r):= \frac{1}{\sigma^2}\exp{\left(\frac{-\sigma^2}{\sigma^2-(r-B)^2}\right)} \mathbb{1}_{[B-\sigma,B+\sigma]}(r).
\end{align}
Although of similar shape, as depicted in Figure \ref{F:s}, the crucial difference compared to \eqref{d:sgauss} is its compact support. Throughout the analysis-part of this article we focus on choice \eqref{d:scomp} out of convenience. Doing so, we will not neglect to point out the main analytical differences and, most importantly, the biological significance of these modelling choices, supported by numerical simulations in Section \ref{s:num}.

In both cases, the constant $B>0$ describes the \emph{preferred feeding ratio} between predator and its prey, while the parameter $\sigma>0$ denotes the \emph{variance from this feeding preference} $B$. We specially observe that for both choices \eqref{d:sgauss} and \eqref{d:scomp} $s \in C^{\infty}([0,\infty))$ hold and that both functions are bounded from above respectively by $\frac{1}{\sigma\sqrt{2\pi}}$ and $\frac{1}{e\sigma^2}$, their respective maximal values when $r$ reaches the preferred feeding ratio $B$. 

The main difference being that in the first modelling choice \eqref{d:sgauss} all feeding ratios between predator and prey are possible, although most of them, depending on the width $\sigma$ of the Gaußian, with quite low probability. Especially, these dynamics also include the biologically not applicable situations of predating organisms with arbitrary small size as well as preys of arbitrary big size. The second feeding preference function \eqref{d:scomp} just admits a variance $\sigma$ around the preferred feeding ratio, while predation interactions between individuals with weights such that $w_{predator}/w_{prey} \notin  [B-\sigma,B+\sigma]$ are not admitted. As it can be seen in Section \ref{s:longtime}, supported by simulations in Section \ref{s:num}, this will be the crucial property, which admits non-zero quasi-stationary solutions of \eqref{e:PJG}-\eqref{IC_PJG} with gaps in the size-spectrum, while this cannot be the case for the Gaußian feeding preference function. 
\begin{figure}[H]
	\centering
	\begin{subfigure}{0.45\textwidth}
		\includegraphics[width=8.5cm]{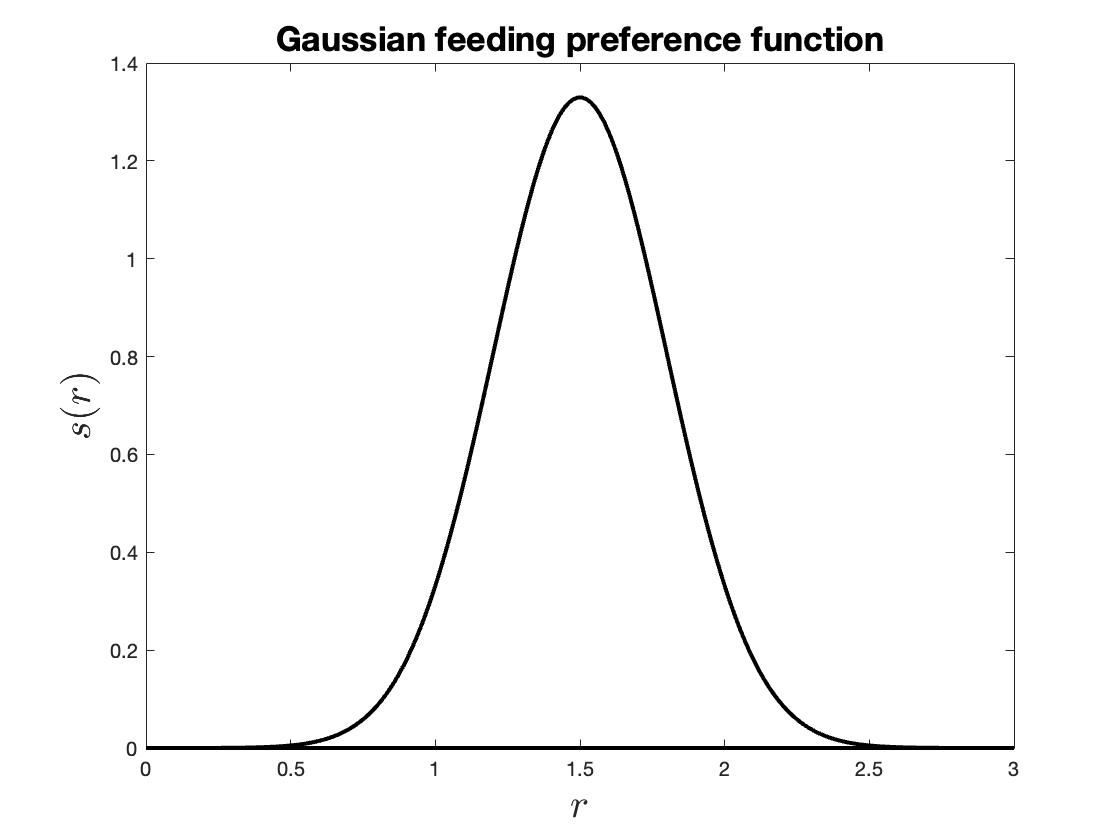}
	\end{subfigure}
	\hfill
	\begin{subfigure}{0.45\textwidth}
		\includegraphics[width=8.5cm]{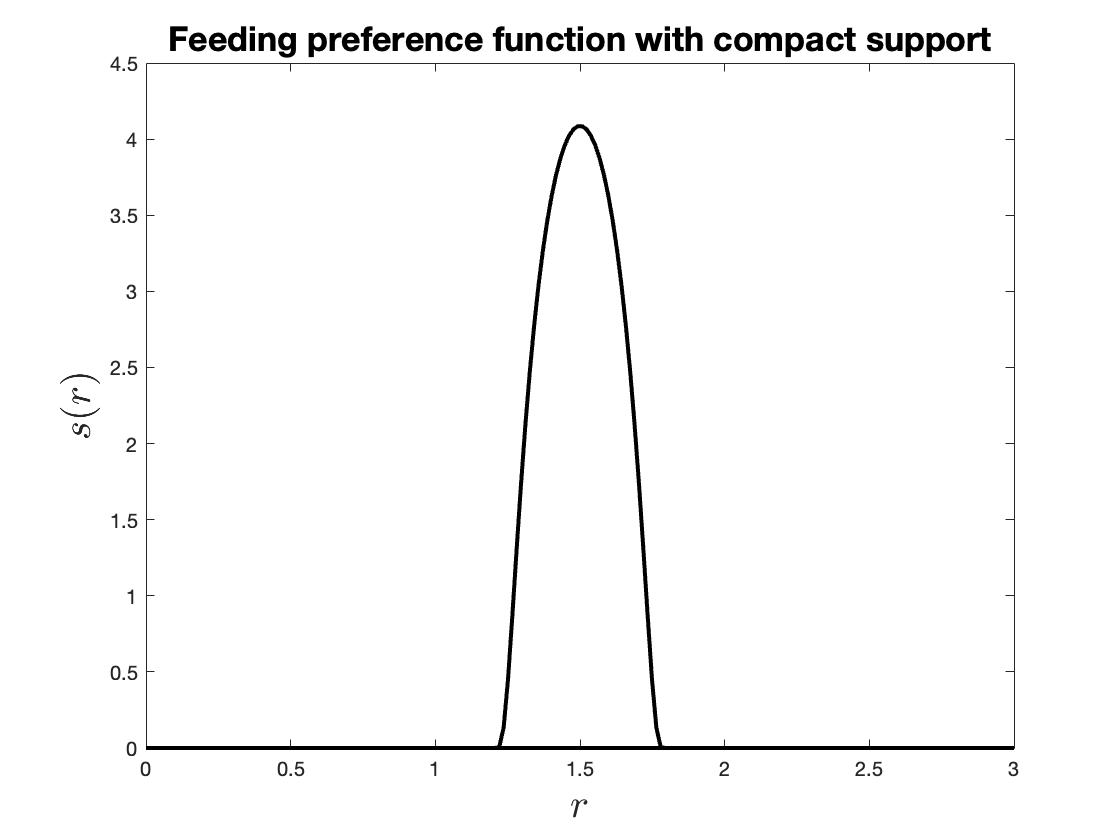}
	\end{subfigure}
	\caption{Both choices of the feeding preference function  $s(\cdot)$ plotted against predator-prey ratio $r$. On the left, the Gaußian feeding preference \eqref{d:sgauss}. On the right the feeding preference with compact support \eqref{d:scomp}. The parameters $B=1.5$ and $\sigma=0.3$ were chosen.}
	\label{F:s}
\end{figure}

We point out that both feeding preference functions a-priori admit predation events where predators are of smaller size than their prey. Since these predation events are under-represented in aquatic food-webs a reasonable choice of parameters $B$ and $\sigma$ would include $B-\sigma \geq1$. In that case the probability of a predator feeding on prey bigger than itself is very unlikely in case of \eqref{d:sgauss}, while it is completely impossible with the choice \eqref{d:scomp}.

In the limit-case $\sigma \to 0$ the feeding preference function degenerates in both cases to
$$
s(r)=\delta_B(r),
$$
implying that a predator exclusively feeds on prey with size given by a fraction $1/B$ of its own size, which is inspired by \cite{SP}. The corresponding model \eqref{e:PJG}-\eqref{IC_PJG} then simplifies to the following ordinary differential equation
\begin{align}\label{e:PJGdirac}
	\pa_tf(w)=&\frac{B^{\alpha}w^{\alpha+1}}{(K+B)^{\alpha+2}} f\left(\frac{w}{B+K}\right)f\left(\frac{Bw}{B+K}\right) + \frac{B^{\alpha}(1-K)w^{\alpha+1}}{K'^{\alpha+3}}f\left(\frac{w}{K'}\right)f\left(\frac{Bw}{K'}\right) \notag \\
	\phantom{=}-&\frac{w^{\alpha+1}}{B^2}f(w)f\left(\frac{w}{B}\right)-B^{\alpha}w^{\alpha+1}f(w)f(Bw), \\
	f(w,0)&=f_0, \notag 
\end{align} 
for $w \in [0,\infty)$. 

\paragraph{Assumptions on dependencies:}\label{p:parameters}
The parameters and quantities model \eqref{e:PJG}-\eqref{IC_PJG} depends on are broad features of an ecosystem and cannot be measured for a general community explicitly. Rather, we summarize our assumptions on the aforementioned, see Table \ref{t:parameters}, and discuss their meaning, considering biological relevance and comparing to existing literature on aquatic organisms and food webs.  

\begin{center}
	\begin{tabular}{||c | c | c ||} 
		\hline
		\textbf{Parameter} & \textbf{Parameter-range considered} & \textbf{Definition} \\ [0.5ex] 
		\hline
		$K$ & 0.1-0.6 & Assimilation constant  \\ 
		\hline
		$K'$ & 0.001-0.1 & 'Offspring' production constant  \\
		\hline
		$\alpha$ & 0.5-1.5 & Search volume exponent  \\
		\hline
		$A$ & 1 & Volume searched per mass$^{-\alpha}$  \\
		\hline
		$B$ & $\geq$ 1 & Preferred ratio of predator/prey size  \\  
		\hline
		$\sigma$ & $\leq$ 0.5 & Diet breadth (Variance from $B$) \\ [1ex] 
		\hline 
	\end{tabular}
	\label{t:parameters}
	%\caption{Relevant parameters with interpretation and value-ranges.}
\end{center}
Throughout this article, we investigate the parameters' influence on the dynamics on the ecosystem by varying them within their parameter-ranges given in the table above, while also making use of the underlying mathematical structure given by specific values. The reference values are taken from previous works, close to biological measurements \cite{BR,DDL,PD,W}.

%Works as \cite{PD,W} suggest that - depending on the concrete aquatic system of interest - predating organisms are able to assimilate a ratio of 0.1 - 0.6 of their prey weight, which gave the bounds for $K$. Moreover, \cite{W} suggest that

\subsection{Formal Properties}\label{ss:properties}

The weak formulation \eqref{e:weakBEp} in the setting of the deterministic jump-growth model with 'offspring' production can be written as 
\begin{align}\label{e:weakPJG}
	&\frac{\md}{\md t} \int_0^{\infty} f(w) \vp(w) \, \md w \\
	&= \int_0^{\infty} \int_0^{\infty} \left(\vp\left(w+Kw'\right)+ \frac{1-K}{K'} \vp\left(K'w'\right)  - \vp(w) - \vp(w')\right) w ^{\alpha} s\left(\frac{w}{w'}\right) f(w) f(w') \, \md w' \, \md w,\notag
\end{align}
where again $\vp : \R_+\to\R_+$ describes a suitable test-function and $s(\cdot)$ can either be of the form \eqref{d:sgauss} or \eqref{d:scomp} at that stage. Formal \emph{conservation of total biomass} of the system \eqref{d:biomass} becomes clear with the choice $\vp(w)=w$ in \eqref{e:weakPJG}, since the expression under the brackets within the integrand vanishes:
\begin{align}\label{c:biomasscons}
	\frac{\md}{\md t} \int_0^{\infty} w f(w) \, \md w = \int_0^{\infty} \int_0^{\infty} \left((w + w')  - (w+w')\right) w^{\alpha} s\left(\frac{w}{w'}\right) f(w) f(w') \, \md w' \, \md w \,.
\end{align}
It is important to state that at that stage, the first moment is just a formally conserved quantity of the system, induced by the modelled post-predatory weight distribution on an individual based scale. The right-hand-side is indeed equal to zero provided that
$$
	\int_0^{\infty} \int_0^{\infty} (w + w')  w^{\alpha} s\left(\frac{w}{w'}\right) f(w,t) f(w',t) \, \md w' \, \md w < \infty, \quad \forall t>0.
$$
However, a blow-up of the of total biomass can happen in finite time if the above condition fails to hold. This relates to the \emph{gelation phenomenon}, which is by now well understood in the framework of coagulation equations (see \cite{EMP, EP, GL1, GL2} and references therein). 

Further, the choice $\vp \equiv 1$ for the test-function gives insights over the dynamics of the \emph{total number of organisms} in the ecosystem
\begin{align}\label{d:number}
	\mathcal{N}(t):= \int_0^\infty f(w,t) \, \md w.
\end{align}
We obtain from \eqref{e:weakPJG} with $\vp \equiv 1$ 
\begin{align*}
	\dot{\mathcal{N}}(t) = \frac{1-K-K'}{K'} \int_0^{\infty} \int_0^{\infty} w ^{\alpha} s\left(\frac{w}{w'}\right) f(w) f(w') \, \md w' \, \md w,\notag
\end{align*}
hence, increase of the number of individuals in time due to the production of a huge amount of small-sized organisms as a by-product of a predation event. Moreover, although not directly relevant for our model assumptions, the case $s \equiv \bar{s}$ and $\alpha=0$ gives the following differential relation:
\begin{align*}
	\dot{\mathcal{N}}(t) = \bar{s}\frac{1-K-K'}{K'} \mathcal{N}^2(t)\,.
\end{align*}
Thus, $\mathcal{N}(t)$ blows-up in finite time $t_{max}=\frac{K'}{\bar{s}(1-K-K')\mathcal{N}_0}$.
\begin{rem}
The above observations already indicate that attention should be paid to a possible blow-up of moments in finite time. Indeed, from the formulations above it is not clear if the quantities $\m(t)$ and $\mathcal{N}(t)$ stay finite in finite time. However, if one can control the $\alpha$-th moment with respect to $t$, i.e.
\begin{align*}
	\m_{\alpha}(t):=\int_0^\infty w^{\alpha}f(w)\, \md w < \infty, \quad \forall t >0,
\end{align*}
it is easily seen that further $\mathcal{N}(t)$ is controlled due to the boundedness of the feeding preference function $s(r)\leq \bar{s}$, for all $r \in \R_+$. Indeed, in that case we can estimate
\begin{align*}
	\dot{\mathcal{N}}(t) \leq \frac{1-K-K'}{K'} \bar{s} \m_{\alpha}(t) \mathcal{N}(t), 
\end{align*}
and hence, by the virtue of Grönwall inequality, 
\begin{align*}
	\mathcal{N}(t) \leq \mathcal{N}(0) \exp{\left(\frac{1-K-K'}{K'} \bar{s} \int_0^t \m_{\alpha}(s)\, \md s\right)} < \infty, \quad \forall t>0.
\end{align*}
Moreover, the integrals on the right-hand-side of \eqref{c:biomasscons} are finite, hence the total biomass is conserved, provided that the $\m_{\alpha}(t)$ is finite and the feeding preference function $s(\cdot)$ has compact support, as in \eqref{d:scomp}. This can be seen by the following estimate
\begin{align*}
	\int_0^{\infty} \int_0^{\infty} (w + w')  w^{\alpha} s\left(\frac{w}{w'}\right) f(w) f(w') \, \md w' \, \md w \leq \bar{s}C(\sigma, B) \m_{\alpha}(t) \m,
\end{align*}
where we used that due to the boundedness of the support of $s(\cdot)$ one can always estimate the prey weight with the predator weight and vice versa. 
\end{rem}

\paragraph{Moment-Control:}\label{sss:moments}

We generalise the formal observations about the zeroth and the first moment, to the question which moments
\begin{align}\label{d:moment}
	\m_m[f](t):=\int_0^\infty w^m f(w,t) \, \md w
\end{align}
of the distribution $f(\cdot, t)$ are expected to be controlled over time $t>0$. Starting again from the weak formulation \eqref{e:weakPJG} and the choice $\vp(w):=w^{m}$, we first aim to identify the sign of the right-hand-side with respect to the power $m>0$. We obtain
\begin{align}\label{e:moments}
	&\dot{\m}_m[f](t)= \int_0^{\infty} \int_0^{\infty} \left(\left(w+Kw'\right)^m+ \frac{1-K}{K'} \left(K'w'\right)^m  - w^m - w'^m\right)w^{\alpha} s\left(\frac{w}{w'}\right) f(w) f(w') \, \md w' \, \md w \notag \\
	&=\int_0^{\infty} \int_0^{\infty} \left(\left(\frac{w}{w'}+K\right)^m+ (1-K)K'^{m-1}  - \left(\frac{w}{w'}\right)^m -1 \right)w^{\alpha}w'^m s\left(\frac{w}{w'}\right) f(w) f(w') \, \md w' \, \md w  \\
	&= \int_0^\infty \int_0^{\infty} \left(\left(r+K\right)^m+ (1-K)K'^{m-1}  - r^m - 1\right) w^{\alpha+m+1} r^{-m-2} s(r) f\left(\frac{w}{r}\right) f(w) \, \md w \, \md r, \notag
\end{align}
where similar to \eqref{e:PJG_r} the last equality was due to the coordinate change $w' \to r$, via $\frac{w}{w'}=r$. Negativity of the expression between the brackets indicates decay of the corresponding moment, provided the involved integrals are finite. Hence, we aim to characterize under which choice of parameters the expression between the brackets is negative for a.e. $(r, m) \in (0,\infty) \times (0,\infty)$. Therefore, we define the quantity
\begin{align}\label{d:F}
	F(m,r):= (r+K)^m+(1-K)K'^{m-1}-r^m-1,
\end{align} 
and investigate the behaviour of the function for parameters $K, K' \ll 1$ and $(m,r) \in \R_+\times\R_+$. 
First, one notices that for $m=0$, which corresponds to \eqref{d:number}, $F$ is independent of $r$ and has value 
$$
	F(0,r)=\frac{1-K-K'}{K'} >0, \quad \forall r \in [0,\infty),
$$
which increases as the 'offspring' production constant $K'$ decreases. Obviously, $F$ vanishes for $m=1$ for all $r \in [0,\infty)$, which coincides with formal conservation of total biomass \eqref{c:biomasscons}. Second, one easily calculates
$$
\pa_mF(m,r)=\ln{\left(r+K\right)}\left(r+K\right)^m +(1-K)\ln{(K')}K'^{m-1}-\ln(r)r^m,
$$
thus, the derivative of $F(\cdot,r)$ at $m=1$ is given by
$$
\pa_mF(1,r) = \ln{\left(r+K\right)}\left(r+K\right) +(1-K)\ln{(K')} - \ln(r)r.
$$
Its sign decides if we have decay of moments and whether they are greater or less than 1. By a simple reformulation of the above,  one obtains the condition
\begin{align}\label{c:der}
\pa_mF(1,r)\lessgtr0 \quad \Leftrightarrow \quad (r+K)^{r+K}K'^{1-K}\lessgtr r^r.
\end{align}
These observations lead to the following result.

\begin{lemma}\label{l:mm}
	Let $r \in [a,b]$ such that $0<a<b$. Then for $K<1$ given and $K'$ chosen accordingly small enough, there exist constants $0<\tilde{m}<1$ and $1<m_*<\infty$ such that we have
	\begin{align*}
		F(m,r) > 0, \quad \forall m \in (\tilde{m},1), \quad \forall  r \in [a,b],
	\end{align*}
	and
	\begin{align*}
		F(m,r) < 0, \quad \forall m \in (1, m_*), \quad \forall  r \in [a,b].
	\end{align*}	
\end{lemma}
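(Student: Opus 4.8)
The plan is to exploit the two facts already recorded before the statement: $F(1,r)=0$ for every $r$, and the derivative $\partial_m F(1,r)=\ln(r+K)(r+K)+(1-K)\ln K'-\ln(r)r$ can be driven negative by shrinking $K'$. So the strategy is to make $\partial_m F$ strictly negative on a two-sided $m$-neighbourhood of $1$, uniformly for $r\in[a,b]$, and then reconstruct $F$ by integrating in $m$ starting from the zero at $m=1$; the signs then fall out automatically, being $>0$ just below $m=1$ and $<0$ just above.

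First I would fix $K<1$ and note that, since $[a,b]\subset(0,\infty)$ is compact, the $r$-dependent part $h(r):=\ln(r+K)(r+K)-\ln(r)r$ of $\partial_m F(1,r)$ is continuous there, hence bounded by some $C=C(a,b,K)$; choosing $K'$ so small that $(1-K)|\ln K'|\ge C+1$ forces $\partial_m F(1,r)\le -1=:-2\delta<0$ for all $r\in[a,b]$. Next I would differentiate once more,
\[\partial_m^2 F(m,r)=[\ln(r+K)]^2(r+K)^m+(1-K)[\ln K']^2K'^{\,m-1}-[\ln r]^2r^m,\]
which is bounded on $[1/2,3/2]\times[a,b]$ by some $M=M(a,b,K,K')$, so that $|\partial_m F(m,r)-\partial_m F(1,r)|\le M|m-1|$ by the mean value theorem in $m$. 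Hence $\partial_m F(m,r)\le-\delta<0$ whenever $|m-1|\le\eta:=\min\{1/2,\delta/M\}$ and $r\in[a,b]$.

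Finally, setting $\tilde m:=1-\eta\in(0,1)$ and $m_*:=1+\eta\in(1,\infty)$, I would use $F(1,r)=0$ to write $F(m,r)=\int_1^m\partial_m F(s,r)\,\md s$ for $(m,r)\in(\tilde m,m_*)\times[a,b]$: the integrand is negative, so this is $\ge\delta(1-m)>0$ for $m\in(\tilde m,1)$ and $\le-\delta(m-1)<0$ for $m\in(1,m_*)$, which is the assertion. The point requiring care — and the reason the interval is only asserted to exist rather than given explicitly — is that this argument is genuinely non-uniform in $K'$: the bound $M$ grows like $(1-K)(\ln K')^2$ as $K'\to0$, so $\eta$, and with it the length of the admissible $m$-window, degenerates to $0$. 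This is unavoidable, since for any fixed $\nu>0$ one has $|\ln K'|K'^{\,\nu}\to0$ as $K'\to0$, so no fixed neighbourhood of $m=1$ can work for all small $K'$; but as only a nondegenerate interval is claimed, this causes no difficulty.
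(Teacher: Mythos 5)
Your argument is correct and follows essentially the same route as the paper: choose $K'$ small enough that $\partial_m F(1,r)<0$ uniformly on the compact interval of feeding ratios, then use $F(1,r)=0$ to propagate the sign to one-sided neighbourhoods of $m=1$; you merely make the paper's continuity step quantitative via the second-derivative bound and the mean value theorem. Only your closing aside overreaches: the degeneration of the $m$-window as $K'\to 0$ is a feature of your derivative-based estimate rather than of $F$ itself (for fixed $m>1$ moderately close to $1$ one has $F(m,r)\to (r+K)^m-r^m-1$ as $K'\to 0$, which can stay negative uniformly on $[a,b]$), but since the lemma fixes $K'$ first this does not affect the validity of the proof.
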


\begin{proof}
Starting from condition \eqref{c:der}, we can observe that one can find sufficiently small $K'$ such that $(r+K)^{r+K}K'^{1-K} < r^r$ being equivalent to
$$
	\left(1+\frac{K}{r}\right)^r\left(r+K\right)^K K'^{1-K} < 1
$$  
holds for all $0<B-\sigma < r < B+\sigma$. This ensures the existence of an interval $[1,m_*]$, $m_*>1$, such that $F(m,r) \leq 0$ for all $m \in [1,m_*]$ and $r \in [B-\sigma,B+\sigma]$, from which claim \eqref{c:entropy} follows.

Moreover, from the strict inequality in condition \eqref{c:der} one can deduce that, under the same parameter regime for sufficiently small $K'$, there has to exist an interval $[\tilde{m},1]$, $\tilde{m}<1$, of small powers such that $F(m,r)>0$ for all $m \in [\tilde{m},1]$, $r \in [B-\sigma,B+\sigma]$ and, hence, growth in time of the moments smaller than 1 \eqref{c:growth}.
\end{proof}

\begin{figure}[H]
	\centering
	\includegraphics[width=10cm]{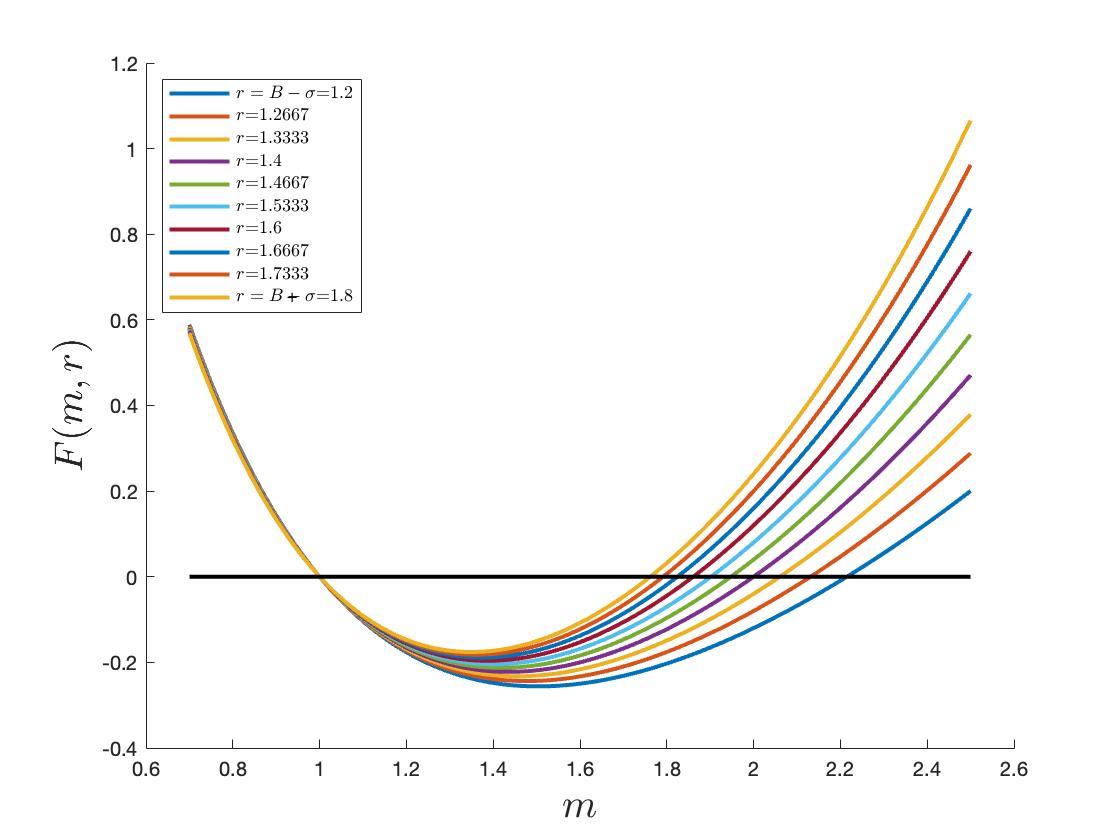}
	\caption{Function $F$ \eqref{d:F} plotted against moments $m$ with various values $r \in \supp{(s)}$, with  $s(\cdot)$ as \eqref{d:scomp}, $B=1.5$, $\sigma=0.3$, $K=0.3$ and $K'=0.1$.}
	\label{F(m,r)}
\end{figure}

\begin{rem}\label{r:F}
	\begin{itemize}
	 \item The above results indicate growth and decay of the moments of a solution $f:\, \R_+\times\R_+ \to \R_+$ to \eqref{e:PJG}-\eqref{IC_PJG}. Indeed, if the feeding preference function $s:\,\R_+\to\R_+$ is given as \eqref{d:scomp} such that $\inf \supp{(s)}=B-\sigma >0$. Then for $K$ and $K'$ small enough, we have
	\begin{align}\label{c:growth}
		\dot{\m}_m[f](t) \geq 0, \quad \forall m \in (\tilde{m},1),
	\end{align}
	and
	\begin{align}\label{c:entropy}
		\dot{\m}_m[f](t) \leq 0, \quad \forall m \in (1, m_*),
	\end{align}
	provided, that we do not deal with infinite integrals on the right-hand-side of \eqref{e:moments}. This is the case for $\alpha \in (1,m_*)$ with $\m_{\alpha}(0)<\infty$, from which we can deduce that the moment $\m_{\alpha}(t)$ is non-increasing. This further implies the uniform upper-bound
	$$
		\m_m[f](t) \leq \m_m(0) := \int_0^\infty w^m f_0(w)\, \md w, \quad \forall t >0, \quad m \in (1,m_*).
	$$
	\item The constants $m_*$ and $\tilde{m}$, borders of the intervals, where we can associate a definitive sign to $F$, depend only on the interval of the feeding ratio $r$ as well as on the small parameters $K$ and $K'$. Hence, $K, K'$ and the shape of $s(\cdot)$ \eqref{d:scomp} via its parameters $\sigma$ and $B$ define the moments which are growing and which are decaying. As is customary for this class of problems a tractable, explicit formula for the solution is a-priori completely out of reach. However, our analysis of the qualitative features of the model does not require such an explicit solution.
	
	 In Figure \ref{F(m,r)} the function $F$ \eqref{d:F} was plotted for $m \in [0.7,2.5]$ and various values for $r \in [B-\sigma,B+\sigma]$, where $B=1.5$ and $\sigma=0.3$ was chosen. One can see that $m_* \approx 1.75$ is defined by $F(m_*,B-\sigma)=0$, while $\tilde{m}$ is clearly negative, hence $F(m,r)>0$ for all $m \in [0,1)$, $r \in [B-\sigma,B+\sigma]$.
	 \end{itemize}
\end{rem}

\section{Existence of Solutions}\label{s:existence}

The existence results presented rely on fixed-point arguments of the integral operator on the right-hand-side of \eqref{e:PJG}. Due to the formal conservation of the first moment \eqref{c:biomasscons} we expect the space 
\begin{align*}
	L^1(\R_+,w):=\left\{f : \, \R_+ \to \R_+:\, wf(w) \in L^1(\R_+) \right\}
\end{align*}
to be the right setting. Model-specific properties of the feeding kernel \eqref{g:kernel} are highly relevant: Compactness of the support of the feeding preference $s(\cdot)$ is needed. Moreover, the previously established control of the $\alpha$-th moment in Section \ref{sss:moments}, which just holds for $\alpha \in (1,m_*)$, indicates how crucial its parameter range is to the type of existence result we obtain. Indeed, it is already well known (see, e.g.  \cite{EMP, EP, GL1, GL2}) that due to a possible blow-up of the solution in specific parameter regimes of the exponent one can expect global or just local in time existence of a solution.

\begin{theorem}\label{T:existence1}
Let the feeding preference function $s:\,\R_+\to\R_+$ be given as \eqref{d:scomp} such that $\inf \supp{(s)}=B-\sigma >0$ and $K$ and $K'$ small enough. We further assume that for the search-volume exponent it holds $\alpha \in (1,m_*)$ and $f_0 \in L^1_+(\R_+,w)$ such that 
$$
	\m_{\alpha}^0:=\int_0^{\infty}w^{\alpha} f_0(w) \, \md w < \infty.
$$
Then there exists a unique solution $f \in C^1\left([0,\infty),L^1_+(\R_+,w) \right)$ to \eqref{e:PJG}-\eqref{IC_PJG} and \newline $\m_{\alpha}(t):=\int_0^{\infty}w^{\alpha} f(w,t) \, \md w \leq \m_{\alpha}^0$.
\end{theorem}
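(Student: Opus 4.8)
The plan is to recast \eqref{e:PJG} as the abstract Cauchy problem $\dot f=\tilde Q(f,f)$, $f(0)=f_0$, in the Banach space
\[
X:=L^1(\R_+,w\,\md w)\cap L^1(\R_+,w^{\alpha}\,\md w),\qquad \|f\|_X:=\int_0^\infty (w+w^{\alpha})|f(w)|\,\md w,
\]
where $\tilde Q$ is the symmetric bilinear form obtained from $G_1,G_2,L_1,L_2$ by polarisation, so that $\tilde Q(f,f)=Q(f,f)$; the hypotheses on $f_0$ say precisely $f_0\in X_+:=X\cap\{f\ge 0\}$. The whole argument hinges on the bilinear estimate
\[
\|\tilde Q(f,g)\|_X\le C(B,\sigma,K,K',\alpha)\,\|f\|_X\,\|g\|_X ,
\]
which in particular shows $\tilde Q(f,g)$ is well defined a.e.

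To obtain this estimate I would test each of $G_1(f,g)$, $G_2(f,g)$, $L_1(f,g)$, $L_2(f,g)$ against $w$ and against $w^{\alpha}$ and bound the resulting double integrals using the compact support $\supp(s)=[B-\sigma,B+\sigma]$ with $B-\sigma>0$. Its role is that whenever $s(w/w')\ne 0$ (resp.\ $s(w'/w)\ne 0$) one has $(B-\sigma)w'\le w\le(B+\sigma)w'$, so the two weights are comparable and a factor $w^{\alpha+p}$ ($p\in\{1,\alpha\}$) present in the integrand can be rewritten, up to a multiplicative constant, as $w^{a}w'^{\,b}$ with $a,b\in\{1,\alpha\}$ and $a+b=\alpha+p$ (for instance $w^{\alpha+1}\le C\,w^{\alpha}w'$ and $w^{2\alpha}\le C\,w^{\alpha}w'^{\alpha}$); Tonelli then decouples the integral into $\mathcal{M}_a[|f|]\,\mathcal{M}_b[|g|]\le\|f\|_X\|g\|_X$. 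For $G_1$ one first substitutes $u=w-Kw'$ (unit Jacobian), for the loss terms no substitution is needed, and for $G_2$ one substitutes $u=w/K'$ and then $v=ur$, after which the surviving powers of $K'$ are $K'^{\,0}$ and $K'^{\,\alpha-1}$, both finite because $\alpha>1$ and $0<K'<1$. I expect this to be the main technical obstacle: it is essential that one never needs to control a moment of order larger than $\alpha$ (so the assumption $\alpha\in(1,m_*)$ is genuinely used), and the bookkeeping of the changes of variables together with which power of $w$ is assigned to which variable is delicate; the compact support of $s$ is what makes it possible at all.

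Granting the bilinear bound, $\tilde Q:X\times X\to X$ is locally Lipschitz, since $\tilde Q(f,f)-\tilde Q(g,g)=\tilde Q(f+g,f-g)$ gives $\|\tilde Q(f,f)-\tilde Q(g,g)\|_X\le C(\|f\|_X+\|g\|_X)\|f-g\|_X$. Hence the Cauchy--Lipschitz theorem in Banach spaces yields a unique maximal solution $f\in C^1([0,T_{\max}),X)$ of \eqref{e:PJG}--\eqref{IC_PJG}, with a local existence time depending only on $\|f_0\|_X$; since $X\hookrightarrow L^1(\R_+,w)$ this already gives the stated regularity once globality is established. For nonnegativity I would use the Duhamel form
\[
f(w,t)=f_0(w)\,e^{-\int_0^t\mathcal{L}[f](w,s)\,\md s}+\int_0^t e^{-\int_{\tau}^t\mathcal{L}[f](w,s)\,\md s}\,(G_1+G_2)(f,f)(w,\tau)\,\md\tau,
\]
with $\mathcal{L}[f](w):=\int_0^\infty\big(w^{\alpha}s(w/w')+w'^{\alpha}s(w'/w)\big)f(w')\,\md w'$, which by the same compact-support estimates is finite for a.e.\ $w$: the Picard iterates produced by this formula from $f_0\ge 0$ are nonnegative, so their $X$-limit $f$ is nonnegative, i.e.\ $f\in C^1([0,T_{\max}),L^1_+(\R_+,w))$.

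It remains to upgrade to a global solution. Because $f(t)\in X$, the linear functionals $\mathcal{M}_1$ and $\mathcal{M}_{\alpha}$ are differentiable along the solution and the weak formulation \eqref{e:weakPJG} holds with $\varphi(w)=w$ and $\varphi(w)=w^{\alpha}$, all arising integrals being finite by the bilinear estimate above; since $F(1,r)=0$ and, by Lemma \ref{l:mm}, $F(\alpha,r)<0$ for $r\in[B-\sigma,B+\sigma]$ when $\alpha\in(1,m_*)$, identity \eqref{e:moments} gives $\frac{\md}{\md t}\mathcal{M}_1[f](t)=0$ and $\frac{\md}{\md t}\mathcal{M}_{\alpha}[f](t)\le 0$. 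Therefore $\|f(t)\|_X=\mathcal{M}_1[f](t)+\mathcal{M}_{\alpha}[f](t)\le\mathcal{M}_1[f_0]+\mathcal{M}_{\alpha}^0$ for all $t<T_{\max}$, and since the local existence time is controlled by $\|f_0\|_X$ alone, the standard continuation argument forces $T_{\max}=\infty$. Uniqueness is the one already furnished by Cauchy--Lipschitz (equivalently, a Grönwall estimate on $\|f-g\|_X$ for two solutions, using the bilinear bound and the uniform a priori bound). This produces the unique global solution $f\in C^1([0,\infty),L^1_+(\R_+,w))$ with $\mathcal{M}_{\alpha}(t)\le\mathcal{M}_{\alpha}^0$, as claimed.
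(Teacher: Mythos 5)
Your proposal is correct and follows essentially the same route as the paper: a Lipschitz/bilinear estimate for the collision operator exploiting the compact support of $s$ (comparability of predator and prey weights), Picard iteration for local well-posedness, a positivity argument, and then conservation of biomass together with the decay of the $\alpha$-moment for $\alpha\in(1,m_*)$ (Lemma \ref{l:mm}) to rule out blow-up and extend globally, giving $\m_{\alpha}(t)\leq\m_{\alpha}^0$. The only differences are cosmetic: you work in the intersection space $L^1(\R_+,w\,\md w)\cap L^1(\R_+,w^{\alpha}\md w)$, whereas the paper estimates in $L^1(\R_+,w)$ on the set of functions with $\alpha$-moment bounded by $\m_{\alpha}^0$ (via Young's inequality for convolutions), and you obtain non-negativity from a Duhamel/positivity-preserving iteration while the paper runs a Gr\"onwall argument on the negative part.
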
	

\begin{proof}
We aim to prove a \emph{Lipschitz}-property of the right-hand-side of \eqref{e:PJG}. Let therefore $f^1, f^2 \in L^1_+(\R_+,w)$ with 
$$
\|w^{\alpha-1}f^1\|_{L^1_+(\R_+,w)},\|w^{\alpha-1}f^2\|_{L^1_+(\R_+,w)} \leq \m_\alpha^0.
$$
The four terms on the right-hand-side of \eqref{e:PJG} will be dealt with separately, where always the trivial algebraic identity $f^1 f^{1'} - f^2f^{2'} = f^1(f^{1'} - f^{2'})+f^{2'}(f^1-f^2)$ is used. For the first gain term $G_1$ we obtain
\begin{align*}
	&\left\|G_1(f^1,f^1)-G_1(f^2,f^2)\right\|_{L^1(\R_+,w)} \\
	&=\left\| \int_0^{\frac{w}{K}} \left(w-Kw'\right)^{\alpha} s\left(\frac{w-Kw'}{w'}\right) \left(f^1(w-Kw') f^1(w') - f^2(w-Kw') f^2(w')\right) \, \md w' \right\|_{L^1\left(\R_+,w\right)} \\
	& \leq \int_0^{\infty} \int_0^{\infty} w \left(w-Kw'\right)_+^{\alpha} s\left(\frac{w-Kw'}{w'}\right) \left|f^1(w-Kw')\right| \left|f^1(w')-f^2(w')\right| \,\md w' \, \md w \\
	&+  \int_0^{\infty} \int_0^{\infty}w \left(w-Kw'\right)_+^{\alpha} s\left(\frac{w-Kw'}{w'}\right) \left|f^2(w')\right| \left|f^1(w-Kw')-f^2(w-Kw')\right|  \,\md w' \, \md w \\
	&= \frac{1}{K} \int_0^{\infty} \int_0^{\infty} w \left(w-w'\right)_+^{\alpha} s\left(K\frac{w-w'}{w'}\right) |f^1(w-w')| \left|f^1\left(\frac{w'}{K}\right)-f^2\left(\frac{w'}{K}\right)\right| \,\md w' \, \md w \\
	&+ \frac{1}{K} \int_0^{\infty} \int_0^{\infty}w \left(w-w'\right)_+^{\alpha}  s\left(K\frac{w-w'}{w'}\right) \left|f^2\left(\frac{w'}{K}\right)\right| |f^1(w-w')-f^2(w-w')|  \,\md w' \, \md w.
\end{align*}
Next, we use the boundedness of the feeding preference function $s(\cdot)$ \eqref{d:scomp} as well as its compact support, so that we can always estimate the predator weight with the prey and vice versa. The to these estimates corresponding multiplicative constants will not be written explicitly and we notice that they have different forms depending on the term of the collision operator which is estimated. Starting with the first gain term we obtain
\begin{align*}
	&\left\|G_1(f^1,f^1)-G_1(f^2,f^2)\right\|_{L^1(\R_+,w)} \\
	&\leq \frac{C_1(B,\sigma,K)}{K} \int_0^{\infty} \int_0^{\infty} w' \left(w-w'\right)_+^{\alpha} \left|f^1(w-w')\right| \left|f^1\left(\frac{w'}{K}\right)-f^2\left(\frac{w'}{K}\right)\right| \,\md w' \, \md w \\
	&+\frac{C_2(B,\sigma,K)}{K} \int_0^{\infty} \int_0^{\infty}w'^{\alpha} \left(w-w'\right)_+ \left|f^2(w')\right| \left|f^1(w-w')-f^2(w-w')\right|  \,\md w' \, \md w \\
	&\leq K C_1(B,\sigma,K) \m_\alpha^0  \|f^1-f^2\|_{L^1(\R_+,w)} + \frac{C_2(B,\sigma,K)}{K} \m_\alpha^0  \|f^1-f^2\|_{L^1(\R_+,w)} \\
	&\leq \tilde{C}(B,\sigma, K) \m_\alpha^0  \|f^1-f^2\|_{L^1(\R_+,w)}\,,	
\end{align*}
where $\tilde{C}(B,\sigma,K) = \max\left\{ C_1(B,\sigma,K),  C_2(B,\sigma,K)\right\}$. Similarly we proceed with the remaining three terms:
\begin{align*}
	&\left\|G_2(f^1,f^1)-G_2(f^2,f^2)\right\|_{L^1(\R_+,w)} \\
	&=\frac{1-K}{K'^2}\left\| \int_0^{\infty}w'^{\alpha} s\left(\frac{w'K'}{w}\right) \left(f^1\left(\frac{w}{K'}\right) f^1(w') - f^2\left(\frac{w}{K'}\right) f^2(w')\right) \, \md w' \right\|_{L^1\left(\R_+,w\right)} \\
	&\leq C_3(B,\sigma,K') (1-K)K'^{\alpha-1} \int_0^{\infty} \int_0^{\infty} w^{\alpha}w' \left|f^1\left(w\right)\right| \left|f^1(w')-f^2(w')\right| \, \md w' \, \md w \\
	&+ C_4(B,\sigma) (1-K)K'^{\alpha-1} \int_0^{\infty} \int_0^{\infty} w w'^{\alpha} \left|f^2\left(w'\right)\right| \left|f^1\left(w\right) -f^2\left(w\right) \right| \, \md w' \, \md w \\
	&= \hat{C}(B,\sigma,K') \m_\alpha^0 \|f^1-f^2\|_{L^1(\R_+,w)},
\end{align*}
where again $\hat{C}(B,\sigma,K')$ describes the maximum over the two multiplicative constants before. The loss terms are due to the lack of convolution of a simpler structure:
\begin{align*}
	&\left\|L_1(f^1,f^1)-L_1(f^2,f^2)\right\|_{L^1(\R_+,w)} \\
	&=\left\| \int_0^{\infty}w^{\alpha} s\left(\frac{w}{w'}\right) \left(f^1(w) f^1(w') - f^2(w) f^2(w')\right) \, \md w' \right\|_{L^1\left(\R_+,w\right)} \\
	&\leq C_4(B,\sigma)  \int_0^{\infty} \int_0^{\infty} w^{\alpha}w' \left|f^1(w)\right| \left|f^1(w')-f^2(w')\right| \, \md w' \, \md w \\
	&+ C_4(B,\sigma) \int_0^{\infty} \int_0^{\infty} w w'^{\alpha} \left|f^2\left(w'\right)\right| \left|f^1(w) -f^2(w) \right| \, \md w' \, \md w \\
	&=  2 C_4(B,\sigma)  \m_\alpha^0 \|f^1-f^2\|_{L^1(\R_+,w)},
\end{align*}
and
\begin{align*}
	&\left\|L_2(f^1,f^1)-L_2(f^2,f^2)\right\|_{L^1(\R_+,w)} \\
	&=\left\| \int_0^{\infty}w'^{\alpha} s\left(\frac{w'}{w}\right) \left(f^1(w) f^1(w') - f^2(w) f^2(w')\right) \, \md w' \right\|_{L^1\left(\R_+,w\right)} \\
	&\leq  2 C_4(B,\sigma)  \m_\alpha^0 \|f^1-f^2\|_{L^1(\R_+,w)}.
\end{align*}
Unifying the above estimates, we obtain
\begin{align*}
	\|Q(f^1, f^1)-Q(f^2, f^2)\|_{L^1_+(\R_+,w)} \leq C(B, \sigma, K, K') \m_\alpha^0 \|f^1-f^2\|_{L^1(\R_+,w)}\,.
\end{align*}
Therefore a unique local solution exists by Picard iteration, which preserves non-negativity. Boundedness of the $\alpha$-moment holds due to $\alpha \in (1,m_*]$ and \eqref{c:entropy}. In this regime, the formal conservation of biomass, i.e. of the $L^1(\R_+,w)$-norm, holds rigorously, implying global existence. This is a consequence of the boundedness of $\m_{\alpha}$, which implies together with the boundedness and compact support of $s(\cdot)$ that all integrals in \eqref{c:biomasscons} are finite. Hence, no finite-time blow-up phenomenon is possible. 
\end{proof}

The \emph{search exponent} $\alpha$ highly depends on the \emph{trophic level} the organisms are in, but is always given by a number close to 1 and in many modelling approaches taken equal to 1, see, e.g., \cite{DDL}. Suggested by Ware in '78 \cite{W}, for \emph{pelagic fish} $\alpha  \in [0.6,0.9]$ is biologically reasonable. In that case, however, it can be assumed that $\tilde{m}<\alpha<1$, hence \eqref{c:growth} implies growth of $\m_{\alpha}$ with respect to time. Moreover, a \emph{blow-up in finite time} of $\m_{\alpha}(t)$ is viable. From \eqref{e:moments} we estimate due to the boundedness of  $s(\cdot)$ and $F$ 
\begin{align*}
	\dot{\m}_{\alpha}(t) &= \int_0^\infty \int_0^\infty F\left(\alpha, \frac{w}{w'}\right)w^\alpha w'^\alpha s\left(\frac{w}{w'}\right) f(w)f(w') \, \md w' \, \md w \leq C(B,\sigma, K, K') \m_{\alpha}^2(t),
\end{align*}
where $C(B,\sigma, K, K')$ describes a multiplicative constant depending on the parameters in the arguments. This leads to 
\begin{align}\label{c:blowup}
	\m_{\alpha}(t) \leq \frac{\m_{\alpha}(0)}{1-t C(B,\sigma, K, K')\m_{\alpha}(0)}.
\end{align}
Hence, boundedness of $\m_{\alpha}(t)$ can be concluded up to any time $t<\frac{1}{C(B,\sigma, K, K')\m_{\alpha}(0)}$, from which the following local-in-time existence result can be deduced.

\begin{theorem}\label{T:existence2}
	Let the assumptions of Theorem \ref{T:existence1} hold, but with search volume exponent $\alpha \in (\tilde{m},1)$ and assume $f_0 \in L_+^1(\R_+,w)$ such that
	$$
		\m_\alpha^0:=\int_0^\infty w^{\alpha}f_0(w) \, \md w < \infty.
	$$
	Then there exists a unique solution $f \in C^1\left(\left[0, \bar{T}\right),L_+^1(\R_+,w)\right)$ to \eqref{e:PJG}-\eqref{IC_PJG}, where the upper bound for the time interval is given by 
	$$
		0 < \bar{T} :=\frac{1}{\m_\alpha^0C(B,\sigma, K, K')}.
	$$
\end{theorem}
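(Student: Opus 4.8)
The plan is to run the argument of Theorem \ref{T:existence1} essentially verbatim, replacing the \emph{global} bound on $\m_\alpha$ (which there came from the monotonicity \eqref{c:entropy}) by the \emph{local-in-time} bound \eqref{c:blowup} produced by the super-linear differential inequality $\dot\m_\alpha \le C(B,\sigma,K,K')\,\m_\alpha^2$ derived just above the statement.

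Concretely, I would fix $T_0 < \frac{1}{C(B,\sigma,K,K')\,\m_\alpha^0}$, set $R := \sup_{t\in[0,T_0]}\frac{\m_\alpha^0}{1-t\,C(B,\sigma,K,K')\,\m_\alpha^0} < \infty$, and work in the complete metric space $X_{T_0} := \{ f\in C([0,T_0];L^1_+(\R_+,w)) : \sup_{t\in[0,T_0]}\m_\alpha[f](t) \le R\}$ equipped with the metric induced by the $C([0,T_0];L^1(\R_+,w))$-norm. On $X_{T_0}$ the map $\Phi(f)(t):=f_0+\int_0^t Q(f,f)(s)\,\md s$ is well defined, and the Lipschitz estimate established in the proof of Theorem \ref{T:existence1} — whose constant $8C(B,\sigma)\,\m_\alpha^0$ used only the a priori bound $\|w^{\alpha-1}f^i\|_{L^1(\R_+,w)}\le\m_\alpha^0$, which still holds here since $R<\infty$ — gives $\|Q(f^1,f^1)-Q(f^2,f^2)\|_{L^1(\R_+,w)}\le C'\|f^1-f^2\|_{L^1(\R_+,w)}$ pointwise in time, hence after integration in $t$ a contraction once $T_0$ is small enough. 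Its unique fixed point is the sought solution.

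The one genuinely new point is the self-mapping of $\Phi$, i.e. that the $\alpha$-moment stays $\le R$ on $[0,T_0]$. For this I would test the weak formulation \eqref{e:weakPJG} with $\vp(w)=w^\alpha$ and use \eqref{e:moments} together with the boundedness of $s(\cdot)$ and of $F(\alpha,\cdot)$ on the compact support of $s(\cdot)$ to obtain $\frac{\md}{\md t}\m_\alpha[f](t)\le C(B,\sigma,K,K')\,\m_\alpha[f](t)^2$, and then conclude $\m_\alpha[f](t)\le\frac{\m_\alpha^0}{1-t\,C(B,\sigma,K,K')\,\m_\alpha^0}\le R$ on $[0,T_0]$ by ODE comparison — this is precisely \eqref{c:blowup}. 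Having obtained a fixed point on $[0,T_0]$, a standard continuation argument prolongs it: the construction can be restarted from any $T^\ast$ at which $\m_\alpha$ is still finite, so the maximal time of existence is at least the blow-up time of the comparison ODE in \eqref{c:blowup}, which yields the interval stated in the theorem. I expect this bookkeeping to be the main obstacle: one must choose the radius $R$ and the local existence time consistently at each restart so that the Lipschitz constant of $Q$ remains finite exactly as long as $\m_\alpha$ does, ensuring that no obstruction to prolongation appears strictly before the time predicted by \eqref{c:blowup}.

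The remaining ingredients are copied from Theorem \ref{T:existence1}: the $C^1$ regularity in time follows from continuity of $t\mapsto Q(f,f)(t)$ in $L^1_+(\R_+,w)$; non-negativity follows by testing \eqref{e:weakPJG} with $\vp=\mathbb{1}_{f<0}$, which yields $\frac{\md}{\md t}\int_0^\infty f_-(w)\,\md w\le C\,\m_\alpha(t)\int_0^\infty f_-(w)\,\md w$ with $C=\max_{r\in\R_+}r^\alpha s(r)+\max_{r\in\R_+}s(r)$, and then Gr\"onwall together with the (now only local-in-time) bound on $\m_\alpha$; and the formal conservation of biomass becomes rigorous on $[0,T_0]$ because finiteness of $\m_\alpha(t)$ and the compact support of $s(\cdot)$ make all integrals in \eqref{c:biomasscons} finite.
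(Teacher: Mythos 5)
Your proposal is correct and follows essentially the same route as the paper: the quadratic differential inequality for $\m_\alpha$ yielding the a priori bound \eqref{c:blowup}, the Lipschitz estimate of Theorem \ref{T:existence1} with $\m_\alpha^0$ replaced by the finite local-in-time bound (the paper's $\m_\alpha^T$, your $R$), Picard iteration plus continuation up to the blow-up time of the comparison ODE, and non-negativity and biomass conservation handled exactly as in the case $\alpha\geq 1$. Note only that your existence horizon $\frac{1}{C(B,\sigma,K,K')\,\m_\alpha^0}$ is the one actually produced by \eqref{c:blowup}, whereas the theorem's statement writes $\frac{1}{C(B,\sigma,K,K')}$ without the factor $\m_\alpha^0$; your version is the one consistent with the paper's own estimate.
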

\begin{proof}
	We start by explicitly stating the fixed-point procedure
	\begin{align}\label{FP}
		\mathcal{FP}_t: L^1_+(\R_+,w) \to L^1_+(\R_+,w)\,,\quad f \mapsto f_0 + \int_0^t Q(f,f) \, \md s\,,
	\end{align} 
	with $f_0 \in L^1_+(\R_+,w)$ chosen such that
	$$
	\|w^{\alpha-1}f_0\|_{L^1_+(\R_+,w)} = \m_\alpha^0 < \bar{\m}_{\alpha}^T < \infty\,.
	$$
	Due to \eqref{c:blowup} we expect that the $\alpha$-th moment $\m_{\alpha}(t)$ of a solution to \eqref{e:PJG}-\eqref{IC_PJG} blows up in finite time. Therefore, we fix an arbitrary $T>0$ such that $T <\bar{T}$, ensuring that the fraction 
	$$
		 \frac{\m_{\alpha}^0}{1-T C(B,\sigma, K, K')\m_{\alpha}^0}=:\bar{\m}_{\alpha}^T
	$$
	is finite. It can easily be verified that for $f  \in L^1_+(\R_+,w)$ fulfilling $\|w^{\alpha-1}f_0\|_{L^1_+(\R_+,w)} = \m_{\alpha}^0 < \bar{\m}_{\alpha}^T$ we have
	$$
		\int_0^\infty w^{\alpha}\mathcal{FP}_t (f) \, \md w \leq \m_{\alpha}^0 + t C(B,\sigma, K, K') (\bar{\m}_{\alpha}^T)^2\,.
	$$
	Hence, this property is propagated in the fixed-point procedure \eqref{FP}, when the time $t<T$ is chosen small enough such that
	$$
		t < \frac{\bar{\m}_{\alpha}^T-\m_{\alpha}^0}{C(B,\sigma, K, K') (\bar{\m}_{\alpha}^T)^2}\,.
	$$
	The finiteness of the $\alpha$ moment within the fixed-point procedure ensures further that the first moment remains preserved, while positivity follows from the structure of $Q(\cdot,\cdot)$. Following the estimates of $\|Q(f^1,f^1)-Q(f^2,f^2)\|_{L^1(\R_+,w)}$ in the proof of Theorem \ref{T:existence1}, we see (by choosing $t$ probably even smaller) that $\mathcal{FP}_t$ is a contraction on $L^1_+(\R_+,w)$. In virtue of Banach's fixed point theorem a unique solution $f \in C^1\left(\left[0, t\right),L_+^1(\R_+,w)\right)$ exists. By Picard-iteration the time-interval can be extended as long as $t < T<\bar{T}$ ensuring $\m_{\alpha}(t) <\bar{\m}_{\alpha}^T$.
\end{proof}

\section{Steady States and Long-time Behaviour}\label{s:longtime}

In aquatic ecosystems, abundance of organisms with respect to their body-size averaged over space and seasonal changes often varies rather little, suggesting that they may be close to a steady state. We aim to identify the equilibria and long-time behaviour of solutions to our model \eqref{e:PJG}-\eqref{IC_PJG}, before discussing their biological interpretation and significance. 

\subsection{Stationary States}\label{s:ss}

\paragraph{Trivial steady state:} By inserting one can see easily, that at the formal state
\begin{align}\label{d:zerostate}
	\bar{f}_0(w) = \frac{\m}{w}\delta_0(w)
\end{align}
the right-hand-side of \eqref{e:PJG} evaluates at zero. Hence, the model admits a \emph{trivial stationary state}, compatible with conservation of total biomass \eqref{c:biomasscons} via the factor $\frac{\m}{w}$ before $\delta_0$, the Dirac distribution centred at 0. This equilibrium, however, represents a completely extinct ecosystem consisting just of an infinite amount of microorganisms. 

\paragraph{Non-trivial steady state - gaps in the size-spectrum:} In virtue of formulation \eqref{e:PJG_r}, a sufficient condition for $f$ being at a non-trivial steady state is given by
\begin{align}\label{e:stat}
	0=&r^{\alpha}(r+K)^{-\alpha-2} f\left(\frac{wr}{r+K}\right)f\left(\frac{w}{r+K}\right) + (1-K)K'^{-3-\alpha} r^{\alpha}f\left(\frac{w}{K'}\right)f\left(\frac{wr}{K'}\right) \notag \\
	-&r^{-2} f(w) f\left(\frac{w}{r}\right) - r^{\alpha} f(w)f(rw) \notag \\
	& \\
	\m =& \int_0^\infty w f(w) \, \md w, \notag
\end{align}
for almost all $r \in \supp{(s)}$ and almost all $w \in \R_+$. From \eqref{e:stat} it is easy to deduce a \emph{sufficient condition} a steady state solution has to fulfil, given by
$$
	f(w)f(rw) = 0, \quad \text{for a.a. } \,\, r \in \supp{(s)}, w \in \R_+.
$$
On the other hand, in case of $\supp{(s)}=\R_+$, as for \eqref{d:sgauss}, this can only be fulfilled for a distribution such that $f(w)=0$ for almost all $w \neq 0$, which again leads us to the trivial steady state discussed above \eqref{d:zerostate}. On the other hand, if $\supp{(s)}=[B-\sigma,B+\sigma]$, as \eqref{d:scomp}, an admissible steady state, given by a \emph{size spectrum function with gaps}, is possible. To be more precise, the distribution function $f$ has to satisfy the following condition for a $w \in \R_+$:
\begin{align}\label{c:gaps}
	f(w) \neq 0 \, \Rightarrow f(w')=0, \, \text{for a.e. }\, w' \in \left[\frac{w}{B+\sigma}, \frac{w}{B-\sigma}\right].
\end{align}
Obviously, parameters $B$ and $\sigma$ fulfilling
$$
	B-\sigma<1<B+\sigma
$$
induce $w \in  \left[\frac{w}{B+\sigma}, \frac{w}{B-\sigma}\right]$, thus, \eqref{e:stat} is again only satisfied by the trivial equilibrium \eqref{d:zerostate}. The condition
\begin{align}\label{c:Bs}
	B-\sigma>1 \quad \text{or} \quad B+\sigma<1
\end{align}
ensures that $w \notin \left[\frac{w}{B+\sigma}, \frac{w}{B-\sigma}\right]$, hence the existence of a non-trivial equilibrium is possible. In an ecological context this means that an organism does not predate on other organisms within its own trophic level or the next lower or higher ones, closest to it. Moreover, special forms of such steady states can be computed by identifying the gaps' sizes with respect to the preferred feeding ratio $B$ and the variance $\sigma$. Taking into account the biological reasoning, we restrict ourselves to the case $B-\sigma>1$, implying that organisms exclusively feed on prey of smaller size. The support of a non-trivial steady sate $\bar{f}$ of \eqref{e:PJG}-\eqref{IC_PJG} with  $s(\cdot)$ given by \eqref{d:scomp} can be enclosed in the following infinite union of intervals
$$
	\supp{(\bar{f})} \subset \bigcup_{i \in \mathbb{Z}} \left[\frac{\bar{w}}{(B-\sigma)^{i+1}(B+\sigma)^i}, \frac{\bar{w}}{(B-\sigma)^i(B+\sigma)^i}\right],
$$ 
for a value $\bar{w} > 0$. The choice of the reference body size $\bar{w}$ is not unique. Indeed, feasible values are influenced by the initial conditions and model-specific parameters. A trivial observation indicates that the lengths of the intervals 
$$
	l_i:=\bar{w}\frac{B-\sigma-1}{(B-\sigma)^{i+1}(B+\sigma)^i}
$$ 
increase as $i$ decreases. Thus, the closer the intervals lie to the degenerate state zero, the smaller they have to be, which further leads to the conclusion that we expect a highly oscillatory behaviour of a steady state solution to \eqref{e:PJG}-\eqref{IC_PJG}. A further important observation is that the size of the intervals increases as $B$ increases, implying that the smaller the average prey size compared to the predator size becomes the bigger the holes in the size spectrum we can expect.

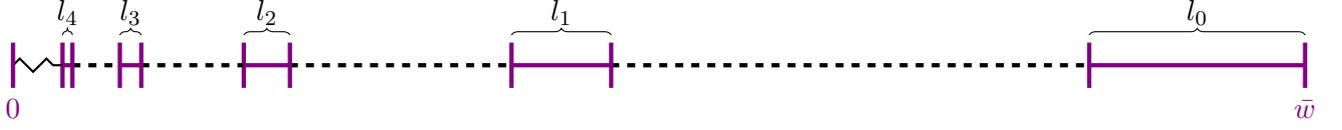
\begin{figure}
	\centering
	\begin{tikzpicture}
		%\draw[thick, -, decorate, decoration=zigzag] (17,0) -- (17.3,0);
		\draw[ultra thick, -, violet] (17,0.3) -- (17,-0.3) node[anchor=south, below]{$\bar{w}$};
		\draw [ultra thick, - , violet] (14.16,0) -- (17,0);
		\draw [- , decorate, decoration=brace] (14.16,0.4) -- (17,0.4);
		\node[anchor=north, above] at (15.58,0.4){$l_0$};
		\draw[ultra thick, -, violet] (14.16,0.3) -- (14.16,-0.3);
		\draw [ultra thick, - , dashed] (7.87,0) -- (14.16,0);
		\draw[ultra thick, -, violet] (7.87,0.3) -- (7.87,-0.3);
		\draw [ultra thick, - , violet] (6.559,0) -- (7.87,0);
		\draw [- , decorate, decoration=brace] (6.559,0.4) -- (7.87,0.4);
		\node[anchor=north, above] at (7.2145,0.4){$l_1$};
		\draw[ultra thick, -, violet] (6.559,0.3) -- (6.559,-0.3);
		\draw [ultra thick, - , dashed] (3.643,0) -- (6.559,0);
		\draw[ultra thick, -, violet] (3.643,0.3) -- (3.643,-0.3);
		\draw [ultra thick, - , violet] (3.036,0) -- (3.643,0);
		\draw [- , decorate, decoration=brace] (3.036,0.4) -- (3.643,0.4);
		\node[anchor=north, above] at (3.3395,0.4){$l_2$};
		\draw [ultra thick, - , violet] (3.036,0.3) -- (3.036,-0.3);
		\draw [ultra thick, - , dashed] (1.687,0) -- (3.036,0);
		\draw [ultra thick, - , violet] (1.687,0.3) --(1.687,-0.3);
		\draw [ultra thick, - , violet] (1.4056,0) -- (1.687,0);
		\draw [- , decorate, decoration=brace](1.4056,0.4) -- (1.687,0.4);
		\node[anchor=north, above] at (1.5463,0.4){$l_3$};
		\draw [ultra thick, - , violet] (1.4056,0.3) --(1.4056,-0.3);
		\draw [ultra thick, - , dashed] (0.781,0) -- (1.4056,0);
		\draw [ultra thick, - , violet] (0.781,0.3) --(0.781,-0.3);
		\draw [ultra thick, - , violet] (0.651,0) -- (0.781,0);
		\draw [- , decorate, decoration=brace](0.651,0.4) -- (0.781,0.4);
		\node[anchor=north, above] at (0.716,0.4){$l_4$};
		\draw [ultra thick, - , violet] (0.651,0.3) --(0.651,-0.3);
		\draw[thick, -, decorate, decoration=zigzag] (0,0) -- (0.651,0);
		\draw[ultra thick, -, violet] (0,0.3) -- (0,-0.3) node[anchor=south, below]{0};
	\end{tikzpicture}
	\caption{Support of non-trivial steady state represented with solid lines, while the gaps in the size spectrum are given by dashed black lines. For this demonstration the values $B=1.5$, $\sigma=0.3$ and the reference weight $\bar{w}=17$ were chosen.}
	\label{f:cascades}
\end{figure}

\begin{rem}
	Due to the quantity of the parameters included in the model as well as due to its non-linear structure it turns out that in practice it is a highly complicated task to predict the shape of the feasible set of such reference values $\bar{w}$. We want to point this out by investigating a specific, but biologically very reasonable case. 
	
	Therefore, let the initial condition \eqref{IC_PJG} be such that $\|f_0\|_{L^1(\R,w)} = \m<\infty$, \newline $w_0:=\max\{\supp{(f_0)}\} < \infty$ such that $w_0f_0(w_0)>c>0$, i.e. we assume that initially there exists a group of largest species with size $w_0$ in the ecosystem of consideration, which inhibit a non-negligible amount of biomass. The feeding behaviour \eqref{d:scomp}, \eqref{c:Bs} suggests that the system allows a non-trivial steady state $\bar{f}$ with $w_0<\bar{w}:=\max\{\supp{(\bar{f})}\} < \infty$, which serves as reference value to determine the support of $\bar{f}$ as
	\begin{align}\label{c:gaps}
		\supp{(\bar{f})} \subset \bigcup_{i \geq 0} \left[\frac{\bar{w}}{(B-\sigma)^{i+1}(B+\sigma)^i}, \frac{\bar{w}}{(B-\sigma)^i(B+\sigma)^i}\right].
	\end{align}
	The question of existence and explicit form of $\bar{w}$ is non-trivial. Indeed, by defining $w_m(t) := \max\{\supp{(f(t))}\}$ and differentiating the total biomass \eqref{c:biomasscons}
	\begin{align*}
		\m = \int_0^{w_m(t)} w f(w) \, \md w
	\end{align*}
	with respect to time one obtains the following time-evolution of the largest organism-size of the ecosystem at time $t>0$:
	\begin{align*}
		\dot{w}_m(t) =& - \frac{1}{w_m(t)f(w_m(t),t)} \int_0^{w_m(t)} w Q(f,f)(w,t) \, \md w. \\
		w_m(0)=&w_0,
	\end{align*}
	Hence, the quantity 
	\begin{align*}
		\int_0^{w_m(t)} w Q(f,f)(w,t) \, \md w =& \int_0^{\infty}\int_0^{\infty} \left[(w+Kw')\mathlarger{\mathlarger{\mathbb{1}}}_{w+Kw' \leq w_m(t)} + (1-K)w' \mathlarger{\mathlarger{\mathbb{1}}}_{K'w' \leq m(t)} \right.\\
		&\left.- w \mathlarger{\mathlarger{\mathbb{1}}}_{w\leq m_m(t)}-w'\mathlarger{\mathlarger{\mathbb{1}}}_{w'\leq w_m(t)}\right] w^{\alpha}s\left(\frac{w}{w'} \right)f(w,t)f(w',t) \, \md w' \, \md w
	\end{align*} 
	can be interpreted as the flux of the system at the moving boundary $w_m(t)$, defining its evolution in time. Due to the complexity of the expression, a steady state $\bar{w}$, corresponding to the maximal organism-size of the stationary distribution $\bar{f}$, can be given only implicitly by 
	\begin{align*}
		\int_0^{\bar{w}} w Q(f,f)(w,t) \, \md w =0.
	\end{align*}	
\end{rem}

\begin{rem}\label{r:cascade}
This suppression of specific trophic levels in an ecosystem is a phenomenon known in ecology as \emph{trophic cascades}, which after being popularized in the '60s \cite{HSS}, was observed in a wide range of ecosystems around the world \cite{Eetal, RGK}. It describes the effect of indirect influence of one trophic level to the next lower/higher after the one of their primary prey/predators, known as \emph{top-down/bottom up trophic cascade}. An example for occurrence of a \emph{top-down cascade} is given if predators in trophic level $T_1$ show that much efficiency in predation that the abundance of their prey in the next lower trophic level $T_2$ decreases immensely. Extinction of individuals in $T_2$ can be expected, while growth of abundance of their prey in trophic level $T_3$ can be observed due to the release from predation pressure. These phenomena will iterate subsequently in all trophic levels giving the characteristic \emph{domes} followed by \emph{gaps} in the size spectrum.
\end{rem}

\subsection{Long-time Behaviour}\label{ss:longtime}
From \eqref{e:moments} and Lemma \ref{c:entropy} we remember that for a solution $f$ to \eqref{e:PJG}-\eqref{IC_PJG} with  $s(\cdot)$ defined as \eqref{d:scomp} and a power $m \in (1,m_*)$ we have
\begin{align}\label{i:decay}
	\dot{\m}_m[f](t) = \int_0^{\infty} \int_{B-\sigma}^{B+\sigma} F(m,r) w^{\alpha+m+1} r^{-m-2} s(r) f\left(\frac{w}{r},t\right) f(w,t) \, \md r \, \md w \leq 0, \quad \text{for} \quad t \geq 0.
\end{align}
$F$ is defined as \eqref{d:F} and strictly negative for all $r \in [B-\sigma, B+\sigma]$ since $m \in (1,m_*)$. We distinguish the following two cases: 

\begin{description}
	\item[Extinction of all species:] On the one hand, for \emph{$B$ and $\sigma$ not satisfying \eqref{c:Bs}} the functional $\m_m[f](t)$ will decrease in time and its dissipation vanishes when $f$ reaches the trivial steady state \eqref{d:zerostate}. Together with the observation $\m_m[\bar{f}_0]\equiv0$ this implies
	\begin{align*}
		\m_m[f](t) \to 0, \quad \text{for} \quad t \to \infty.
	\end{align*} 
	Hence, $\m_m$ serves as \emph{Lyapunov-type} functional, which can be made rigorous in the setting $\alpha \in (1,m_*)$.
	\begin{lemma}\label{l:extinction}
		Let $f$ be a solution to \eqref{e:PJG} with $\alpha \in (1,m_*)$, $\alpha<3$, feeding preference function chosen as \eqref{d:scomp} and parameters $B, \sigma >0$ such that 
		$$
			0<B-\sigma <1<B+\sigma.
		$$ 
		Further, assume initial conditions \eqref{IC_PJG} satisfying
		$$
			\int_0^\infty w^m f_0 \, \md w < \infty,
		$$
		for $m \in \left(\frac{1+\alpha}{2},\frac{m_*+\alpha}{2}\right) \subset (1,m_*)$ with $m<2$. Then, with $\bar{f}_0$ given by \eqref{d:zerostate},  $f$ satisfies 
		\begin{align}
			\lim_{t \to \infty} f(t) = \bar{f}_0 \qquad \text{in the sense of distributions}.
		\end{align}
	\end{lemma}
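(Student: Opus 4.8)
The plan is to use $t\mapsto\m_m[f](t)$ as a Lyapunov functional whose dissipation is \emph{coercive} precisely because $1\in\supp(s)$ when \eqref{c:Bs} fails, which forces the biomass to concentrate at $w=0$. Since $\alpha\in(1,m_*)$, Theorem~\ref{T:existence1} provides a unique global solution with conserved biomass $\m$ and $\m_\alpha(t)\le\m_\alpha^0$. Fix $m$ in the stated interval with additionally $1<m<\alpha$ (possible because $\alpha\in(1,3)$ gives $\tfrac{1+\alpha}{2}<\min\{\alpha,2\}$, and $\int_0^\infty w^m f_0\,\md w<\infty$ still holds by interpolation between $\m$ and $\m_\alpha^0$). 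Then \eqref{e:moments} — hence \eqref{i:decay} — is rigorous for $\varphi(w)=w^m$ (this is where the $C^1$-regularity of $f$, the compact support of $s$ and the $\m_\alpha$-bound are used), so $\dot\m_m[f](t)=-D(t)\le0$ with
\begin{equation*}
	D(t):=\int_0^\infty\!\!\int_{B-\sigma}^{B+\sigma}|F(m,r)|\,w^{\alpha+m+1}r^{-m-2}s(r)\,f\!\left(\tfrac{w}{r},t\right)f(w,t)\,\md r\,\md w\ \ge\ 0 .
\end{equation*}
Consequently $\m_m[f](t)\downarrow L\ge0$ as $t\to\infty$ and $\int_0^\infty D(t)\,\md t=\m_m[f](0)-L<\infty$, so $D(t_n)\to0$ along some sequence $t_n\to\infty$.

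Next I rewrite $D$ through the \emph{biomass measure} $\mu_t:=wf(w,t)\,\md w$, of total mass $\m$. The substitution $r=w/w'$ turns $D(t)$ into $\iint_O G\,\md\mu_t\,\md\mu_t$ with $O:=\{(w,w')\in(0,\infty)^2:\ w/w'\in(B-\sigma,B+\sigma)\}$ and $G(w,w')=|F(m,w/w')|\,w^{\alpha-1}w'^{m-1}s(w/w')$, which is continuous, strictly positive on $O$ (as $m,\alpha>1$ and $|F(m,\cdot)|>0$ on $[B-\sigma,B+\sigma]$), and extends by $0$ to a nonnegative lower semicontinuous function on $[0,\infty)^2$ (since $s$ vanishes at $B\pm\sigma$). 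The bound $\mu_{t_n}([R,\infty))\le R^{1-\alpha}\m_\alpha^0$ makes $(\mu_{t_n})_n$ tight, so, along a subsequence, $\mu_{t_n}\rightharpoonup\mu$ weakly-$*$ with $\mu([0,\infty))=\m$. By the portmanteau theorem $\iint_O G\,\md\mu\,\md\mu\le\liminf_n D(t_n)=0$, whence $(\mu\otimes\mu)(O)=0$ because $G>0$ on $O$. Here the hypothesis $0<B-\sigma<1<B+\sigma$ is decisive: pick $\rho\in\bigl(1,\min\{B+\sigma,(B-\sigma)^{-1}\}\bigr)$; then every interval $J_k:=[\rho^k,\rho^{k+1})$, $k\in\mathbb{Z}$, satisfies $J_k\times J_k\subset O$, so $\mu(J_k)^2=(\mu\otimes\mu)(J_k\times J_k)=0$, and summing over the partition $(0,\infty)=\bigsqcup_{k\in\mathbb{Z}}J_k$ gives $\mu((0,\infty))=0$, i.e.\ $\mu=\m\,\delta_0$. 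This is the rigorous face of the discussion around \eqref{c:Bs}: spectrum gaps are impossible once $1\in\supp(s)$.

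To conclude, write $\m_m[f](t_n)=\int_{[0,\infty)}w^{m-1}\,\md\mu_{t_n}$, the integrand being continuous and vanishing at $0$ as $m>1$. Splitting $[0,\infty)=[0,\varepsilon)\cup[\varepsilon,R]\cup[R,\infty)$: the first piece is $\le\varepsilon^{m-1}\m$; the last equals $\int_R^\infty w^m f(w,t_n)\,\md w\le R^{m-\alpha}\m_\alpha^0$ since $m<\alpha$; the middle tends to $0$ by $\mu_{t_n}\rightharpoonup\m\,\delta_0$ (test $w^{m-1}$ against a continuous cut-off supported in a compact subset of $(0,\infty)$ and equal to $1$ on $[\varepsilon,R]$). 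Sending $\varepsilon\to0$ and $R\to\infty$ gives $\m_m[f](t_n)\to0$, hence $L=0$, hence $\m_m[f](t)\to0$ as $t\to\infty$ by monotonicity. Finally, for $\psi\in C_c^\infty((0,\infty))$ with $\supp\psi\subset[a,b]$ one has $|\langle f(t),\psi\rangle|\le\|\psi\|_\infty\,a^{-m}\,\m_m[f](t)\to0$, while $\int_\varepsilon^\infty wf(w,t)\,\md w\le\varepsilon^{1-m}\m_m[f](t)\to0$ for every $\varepsilon>0$ and $\int_0^\infty wf(w,t)\,\md w\equiv\m$; i.e.\ $wf(w,t)\rightharpoonup\m\,\delta_0$, which is exactly $f(t)\to\bar{f}_0$ in the sense of distributions.

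The main difficulty lies in the identification of the limiting measure: passing to the limit in the \emph{quadratic} dissipation $D$ along $t_n$ (which requires the uniform tightness and moment bounds to control $\mu_{t_n}\otimes\mu_{t_n}$ near infinity, together with the lower-semicontinuity bookkeeping on the open interaction set $O$), and the structural observation that it is exactly the self-overlap at $r=1$ — the violation of \eqref{c:Bs} — that collapses the limit measure onto $\m\,\delta_0$. A secondary technicality is making \eqref{e:moments}/\eqref{i:decay} rigorous for the chosen $m$, which is precisely what the conditions $m<2$, $\alpha<3$ and $m\in(\tfrac{1+\alpha}{2},\tfrac{m_*+\alpha}{2})$ guarantee by keeping all integrals in \eqref{e:moments} finite.
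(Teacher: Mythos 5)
Your proof is correct, and it reaches the conclusion by a genuinely different route than the paper. Both arguments use the same two structural ingredients: the decaying moment $\m_m$ as a Lyapunov functional (with $F(m,\cdot)<0$ on $[B-\sigma,B+\sigma]$), and the fact that the failure of \eqref{c:Bs} puts the ratio $r=1$ inside $\supp(s)$ so that the dissipation ``sees'' the diagonal. But the implementations differ. The paper never extracts weak limits of measures: it integrates the localized dissipation in time to get the uniform bound \eqref{c:unifintt}, combines it with uniform tightness of $w^m f$ (via boundedness of a slightly higher moment), and rules out $\m_m(t)\to\m_\infty>0$ by a contradiction with the time-integrability of the squared mass on a fixed annulus $[1/L,L]$; the pairing $n=2m-\alpha$ is what ties the hypothesis $m\in\bigl(\tfrac{1+\alpha}{2},\tfrac{m_*+\alpha}{2}\bigr)$ to the dissipation. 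You instead exploit $\int_0^\infty D\,\md t<\infty$ only through $D(t_n)\to0$, pass to a narrow limit $\mu$ of the biomass measures (tightness from $\m_\alpha(t)\le\m_\alpha^0$, $\alpha>1$), use lower semicontinuity of the nonnegative kernel $G$ to get $(\mu\otimes\mu)(O)=0$, and then the geometric covering by the intervals $J_k=[\rho^k,\rho^{k+1})$ with $J_k\times J_k\subset O$ collapses $\mu$ onto $\m\,\delta_0$; this is a clean, direct identification of the limit with no contradiction argument. Two further deviations are harmless: you restrict to $m\in\bigl(\tfrac{1+\alpha}{2},\alpha\bigr)$, for which the initial moment is automatic by interpolation between $\m$ and $\m_\alpha^0$ (so you in fact use slightly weaker hypotheses, and you do not need $m<2$, $\alpha<3$, which the paper only needs for its mean-value-theorem test-function estimate — your splitting with conservation of biomass replaces it); and both you and the paper justify the moment identity \eqref{e:moments} at the same formal level, so no gap arises there relative to the paper. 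What the paper's route buys is a quantitative time-integrated dissipation estimate and avoidance of measure-theoretic compactness; what yours buys is a more transparent identification of every limit point and a simpler final step.
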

	\begin{proof}
		First, we prove that for $m \in \left(\frac{1+\alpha}{2},\frac{m_*+\alpha}{2}\right)$ it has to hold
		\begin{align}\label{c:momenttozero}
			\m_m[f](t) \to 0, \quad \text{for} \quad t \to \infty.
		\end{align} 
		For this, we observe from \eqref{i:decay} that for any $n \in (1,m_*)$ one can estimate
		\begin{align*}
			\dot{\m}_n[f](t) \leq & -C(B, \sigma, n) \int_{\tilde{a}}^{\tilde{b}} \int_{\frac{w}{B+\sigma}}^{\frac{w}{B-\sigma}} w^{\alpha}w'^{n} s\left(\frac{w}{w'}\right) f(w,t)f(w',t) \, \md w' \, \md w \\
			\leq & -C(B, \sigma, n, \tilde{a}, \tilde{b}) \int_{\frac{\tilde{b}}{B+\sigma}}^{\frac{\tilde{a}}{B-\sigma}} \int_{\frac{\tilde{b}}{B+\sigma}}^{\frac{\tilde{a}}{B-\sigma}} w^{\alpha}w'^{n}f(w,t)f(w',t) \, \md w' \, \md w,
		\end{align*}
		given any integration boundaries $\tilde{a} < \tilde{b} \in \R_+$ fulfilling $\tilde{a} < \frac{\tilde{b}}{B+\sigma} < \frac{\tilde{a}}{B-\sigma} < \tilde{b}$ (possible due to the assumption $1 \in (B-\sigma, B+\sigma)$), since under this conditions $s\left(\frac{w}{w'}\right)$ is bounded from below for $(w,w') \in \left[\frac{\tilde{a}}{B-\sigma},\frac{\tilde{b}}{B+\sigma}\right]^2$. By further estimating the prey weight in terms of the predator weight (as in the proofs of Theorem \ref{T:existence1} and Theorem \ref{T:existence2}), before integration of the resulting inequality with respect to $t$, we obtain the following uniform integrability condition in time
		\begin{align}\label{c:unifintt}
			\int_0^t \left(\int_a^b w^{\frac{\alpha+n}{2}}f(w,s) \,\md w \right)^2 \, \md s \leq C(\sigma, B, n, a, b), \quad \forall t >0.
		\end{align}
		This now holds for any 
		\begin{align}\label{c:intb}
			0<a<b \quad \text{close enough such that} \quad a(B+\sigma) >b>a> b (B-\sigma),
		\end{align}
		 which can be seen by setting $a=\frac{\tilde{b}}{B+\sigma}$ and $b=\frac{\tilde{a}}{B-\sigma}$ from before. Thus, for any interval $[a,b]$ away from zero the upper bound \eqref{c:unifintt} holds, since it can always be split it up in sub-intervals where each of their borders fulfil \eqref{c:intb}. Moreover, since for every $m \in \left(1,m_*\right)$ one can find an $\ve >0$ such that $\ve+m<m_*$ and, hence, $\m_{\ve+m}[f](t)$ is bounded uniformly in time due to \eqref{i:decay}, we conclude that the function mapping $w \mapsto w^m f(w,t)$ is a \emph{uniformly tight} sequence w.r.t. time $t$. This allows us to find for every constant $\delta>0$ a time $\tilde{t}>0$ and an integration boundary $L>0$ such that 
		$$
			\int_L^{\infty} w^m f(w,t) \, \md w \leq \delta, \quad \forall t \geq \tilde{t}.
		$$
		 To conclude by a contradicting argument let us assume that
		$$
			\m_m[f](t) \to \m_{\infty} > 0, \text{ as } t \to \infty.
		$$
		From \eqref{i:decay} we immediately obtain that $\m_{\infty} \leq \m_m(t)$ for all $t \geq 0$ has to hold. Due to the uniform tightness argument from before we can find an $L>0$ such that
		\begin{align*}
			\m_m[f](t) \leq \int_0^L w^m f(w,t) \, \md w + \frac{\m_{\infty}}{2}, \quad \forall t \geq \tilde{t}.
		\end{align*}
		We further estimate using the conservation of total biomass \eqref{c:biomasscons} as well as $m>1$
		\begin{align*}
			\int_0^L w^m f(w,t) \, \md w = \int_0^{1/L} w^m f(w,t) \, \md w + \int_{1/L}^L w^m f(w,t) \, \md w \leq L^{1-m}\m + \int_{1/L}^L w^m f(w,t) \, \md w.
		\end{align*}
		This together with the estimate from the uniform tightness yields 
		$$
			\int_{1/L}^L w^m f(w,t) \, \md w \geq \frac{\m_{\infty}}{2} - L^{1-m}\m, \quad \forall t \geq \tilde{t},
		$$
		where under the assumption $\m_{\infty}>0$ the right-hand-side can be made positive by choosing $L$ big enough. This immediately contradicts the uniform bound \eqref{c:unifintt} by setting $m=\frac{\alpha+n}{2}$. Indeed, for $m \in \left(\frac{1+\alpha}{2},\frac{m_*+\alpha}{2}\right)$ and $\alpha \in (1,m_*)$ a straightforward calculation shows that we have $n=2m-\alpha \in (1,m_*)$, for which estimate \eqref{c:unifintt} holds. 
		Since the limit $\m_{\infty}$ is non-negative, we conclude $\m_m[f](t) \to 0$ as $t \to \infty$ along every solution $f$ to \eqref{e:PJG}-\eqref{IC_PJG}.
		
		Second, we observe that for any test-function $\vp \in \mathcal{C}_0^\infty([0,\infty))$ one can estimate using the mean value theorem
		\begin{align*}
			&\left|\int_0^\infty w f(w,t) \vp(w) \, \md w -\m \vp(0)\right| =\left|\int_0^\infty w f(w,t) \left(\vp(w)-\vp(0)\right) \, \md w\right|  \\
			&\leq  \int_0^\infty w^m f(w,t) w^{1-m}\left|\vp(w)-\vp(0)\right| \, \md w \leq \m_m(t) \sup_{w \in [0,\infty), \tilde{w} \in [0,w]} \left|w^{2-m}\vp'(\tilde{w})\right|.
		\end{align*}
		The result follows, from \eqref{c:momenttozero} and since the condition $\alpha < 3$ allows to choose $m$ close enough to one such that $2-m>0$, which together with $\vp \in \mathcal{C}_0^\infty([0,\infty))$ ensures that the second factor is bounded. 
	\end{proof}
	
%	\begin{rem}
%		Although a rigorous proof is not provided for the case $\alpha <1$ due to the lack of existence of solutions for long times as well as time-uniform bounds for the moments on compact sets \eqref{c:uniftt}, numerical simulations suggests the validity of Lemma \ref{l:extinction} also for this case, as it can be seen in Figure \ref{}.
%	\end{rem}

	\item[Cascade effect:]\label{d:cascade} On the other hand, if the parameters \emph{$B,\sigma>0$ of the feeding kernel are compatible with condition \eqref{c:Bs}}, then the $m$-th moment $\m_m[f](t)$ is strictly decreasing in time along a solution $f$ of \eqref{e:PJG}-\eqref{IC_PJG} until $f$ satisfies the sufficient equilibrium condition \eqref{c:gaps}. Due to the lack of a concrete form of non-trivial equilibria in case \eqref{c:Bs} and the locality of the aforementioned condition, a rigorous convergence result of $f$ to such a non-trivial steady state fulfilling \eqref{c:gaps} is not possible and will be matter of further investigations. However, this leaves us with a strong indication that in such regime the aquatic system will converge towards a non-zero stationary size-distribution with gaps in its spectrum. 

Numerical simulations in Section \ref{s:num} give evidence to this heuristic argument. See also Figure \ref{f:cascades} representing the support of such a non-trivial steady state: The solid black lines encode the intervals in the size spectrum, in which extinction happened, while the intervals coloured in purple determine the segregated trophic levels. Important to notice here is that such non-trivial steady states for this model are caused by the underlying mechanics of predation: In such a setting individuals no longer feed and are no longer being fed upon, hence the biomass is stuck and immobile. In the natural setting, it is expected that they just describe quasi-steady states, i.e. states towards which the size-distribution converges fast due to the strong effect of predation within the ecosystem, but as soon as they are reached, different dynamics within the ecological environment (e.g., mutualism, starvation, natural mortality, change of predation habits) might force the graph of the size-distribution to converge to a different form.
\end{description}

\subsection{Power-law Equilibria}\label{ss:powerlaw}
In huge marine ecosystems it is well observed \cite{BD,SPS, SSP} that in logarithmic intervals of the organism body size the biomass is approximately constant. This being equivalent to the biomass in logarithmic scales, having slope -1, while transformed into the original variables it translates to a \emph{power-law} with exponent -2. Ecological evidence, on the other hand, provides that in local ecosystems size-distributions are rather different from a power-law, as the aforementioned cascade effect (see Section \ref{d:cascade}), are observed \cite{Eetal,HSS,RGK}. The finding of the aforementioned power-law size-spectrum distribution is based on investigating a large quantity of data from huge ecosystems \cite{SPS}, which naturally combine a variety of feeding behaviours in different food-webs. Hence, we expect that such a power-law phenomenon is difficult to capture with existing size-spectrum models. 

Indeed, in \cite{BR, DDL} it was proven that for their models such a power law stationary solution $w^{\gamma}$ exists with exponent $\gamma$ close to -2 in an appropriate parameter-regime. The local stability analysis of the power-law equilibrium, performed in the follow-up work \cite{DDLP} with methods from spectral analysis, showed that stability of the power-law state is very unlikely. It could only be proved in a very specific parameter-regime being characterised by a very low predator/prey mass ratio $B$, a unusual high assimilation efficiency $K$ and a relatively large diet breadth $\sigma$. Moreover, it should be mentioned that such power-law solutions or linear combinations of such are known to appear as equilibria solutions to the structurally very similar coagulation-fragmentation equations, see \cite{DGS} for instance.

Also equation \eqref{e:PJG} admits such a power-law steady state, which can be seen by inserting the ansatz $\bar{f}(w)=w^\gamma$ into the stationary equation \eqref{e:stat}. We obtain the following condition for the exponent $\gamma$:
\begin{align*}
	0=&r^{\alpha+\gamma}(r+K)^{-\alpha-2-2\gamma} + (1-K)K'^{-3-\alpha-2\gamma} r^{\alpha + \gamma} -r^{-2-\gamma}- r^{\alpha+\gamma} \\
	=&r^{\alpha + \gamma} G(\gamma,r),
\end{align*}
where we defined
$$
	G(\gamma,r):=\left((r+K)^{-\alpha-2-2\gamma} + (1-K)K'^{-3-\alpha-2\gamma} -r^{-2-2\gamma-\alpha}- 1\right).
$$
It is easily seen that $G(\gamma,r)=0$ for all $r \in\supp{(s)}$ when
$$
\gamma:=-\frac{\alpha+3}{2}.
$$
We further observe that $\gamma>-2$ for $\alpha <1$, $\gamma<-2$ for $\alpha >1$ and $\gamma = 2$ for $\alpha=1$.  
\begin{rem}
	Coherent with our expectation of not finding such power-law distributions for our model, we observe that the state
	$$
		\bar{f}(w):=w^{-\frac{\alpha+3}{2}},
	$$ 
	cannot be a feasible solution of \eqref{e:PJG}-\eqref{IC_PJG}, since the total biomass \eqref{c:biomasscons} is infinite once the system reaches this state. 
\end{rem}

\section{Numerical Simulations}\label{s:num}

To illustrate some of the findings of this article, we present numerical simulations of equation \eqref{eq:e:BEpdelta}.

Specifically, we chose a fixed bound $W$, big enough in relation to the other constants of the system, and a finite time $T$ and simulated the equation on $[0, W] \times [0, T]$. The numerical scheme in which this equation is implemented, takes use of a semi-discretisation of the equation, namely a discretisation in the variable $w$:
$$
	\frac{d f_n^N}{dt} = Q^N\left(f^N,f^N\right)_n, \quad n \in \{0,\dots,N\}.
$$
We chose an integer $N$ and divided the interval $[0, W]$ in $N$ intervals of equal size, producing the equidistant grid with grid-points 
$$
	w_n:=n\frac{W}{N}, \quad n \in \{0,\dots,N\}.
$$ 
The numerical unknown is a vector
$$
	f^N(t):=(f_0(t), \dots, f_N(t)) \approx (f(w_0,t),\dots, f(w_N,t)),
$$
while $Q^N$ represents the integrals on the right-hand side of \eqref{eq:e:BEpdelta}, which are approximated by the \emph{trapezoidal rule}. When needed, the value of $f$ at a given point not on the grid is approximated through linear interpolation of its two closest neighbours, i.e. for $w \notin \{w_0,\dots,w_N\}$ we approximate
$$
	f(w,t)\approx \hat{f}(w,t) := \frac{f_{l-1}(t)(w_l-w)+f_l(t)(w-w_{l-1})}{w_l-w_{l-1}}, \quad \text{with }l\text{ s.t. } \,\, w \in [w_{l-1},w_l].
$$ 
This leads us to 
\begin{align*}
	Q^N\Big(f^N,f^N\Big)_n:=& \frac{W}{2N} \sum_{l:\, w_l < \frac{w_n}{K}} \Big[k(w_n-Kw_{l-1},w_{l-1}) \hat{f}(w_n-Kw_{l-1})f_{l-1} +  k(w_n-Kw_l,w_l) \hat{f}(w_n-Kw_l)f_l\Big] \\
	+&\frac{W}{2N} \frac{1-K}{K'^2} \hat{f}\left(w_n/K'\right) \sum_{l=1}^N \Big[k\left(w_{l-1}, w_n/K'\right)f_{l-1} + k\left(w_l, w_n/K'\right) f_l \Big]\\
	-& \frac{W}{2N} f_n \sum_{l=1}^N \Big[\left(k(w_n,w_{l-1})+k(w_{l-1},w_n)\right) f_{l-1} + \left(k(w_n,w_l)+k(w_l,w_n)\right) f_l \Big]\,.
\end{align*}
For the time-discretisation a time-adaptive Runge-Kutta scheme was used. The scheme was implemented in \textsc{Python}, where we used \textsc{integrate.solve\_ivp} from the \textsc{SciPy}-package. Moreover, numerical experiments were exclusively performed for the choice \eqref{g:kernel} for the feeding kernel $k(\cdot,\cdot)$ both with feeding preference function $s(\cdot)$ given by \eqref{d:sgauss} and \eqref{d:scomp}.

\subsection{Numerical Simulations for the Feeding Preference Function with Compact Support}

Simulations are carried out with $N=200$, fixed upper-size bound $W=10$ and final time $T=5$. The initial distribution is chosen as a linear interpolation between $f_0=10$ and $f_N=0.1$. In the figures the solution is plotted at different time-steps of the simulation: at starting with the initial distribution in the up-left subfigure, and ending with the final distribution at time $T$ in the bottom-right figure. 

In Figure \ref{SCa09} the simulation was performed for $\alpha=0.9$, $B=1.5$ and $\sigma=0.3$, $K=0.1$ and $K'=0.01$, hence a parameter-regime allowing a non-trivial equilibrium \eqref{c:gaps}. Although an analytical proof is not provided in Section \ref{ss:longtime} the simulations show convergence to a solution with gaps in the size-spectrum, which remains constant after time $T=5$. It can be seen clearly that they become smaller giving evidence to the calculations in the previous Section \ref{s:ss}, \eqref{c:gaps}. Additionally, it should be mentioned that the upper bound $W=w_0=\max\{\supp(f_0)\}=10$ does not serve as reference value $\bar{w}$ for the body-size. Indeed, at the stationary state a small interval left from $w_0$, where the solution takes value zero, can be observed. This is a consequence of the limits of our simulations, which does not show growth of an individual beyond the upper bound $W$.

In Figure \ref{SCa11} the same effect can be observed for $\alpha=1.1$, with the other parameters chosen the same as in Figure \ref{SCa09}, emphasising that for our model the choice of $\alpha $ does not have significant impact on the qualitative behaviour of the solution.

With parameter values $B=1.1$ and $\sigma=0.3$ condition \eqref{c:Bs} is violated, hence extinction of the ecosystem is to be expected due to Lemma \ref{l:extinction}, which is visualised in Figure \ref{SCa11B11}.

In Figure \ref{SCa09B15K04} the simulation was performed for $\alpha=0.9$, $B=1.5$ and $\sigma=0.3$, $K=0.4$ and $K'=0.01$, again allowing a non-trivial equilibrium with gaps in the size-spectrum. Different to the simulations in Figure \ref{SCa09} one can observe domes with much smaller altitude, which is related to the higher assimilation efficiency $K$ enhancing the growth of the predating organisms and hence enforcing a stronger drift to the right. 

\begin{figure}[ht]
\centering
\begin{subfigure}{.45\textwidth}
    \centering
    \includegraphics[width=.85\linewidth]{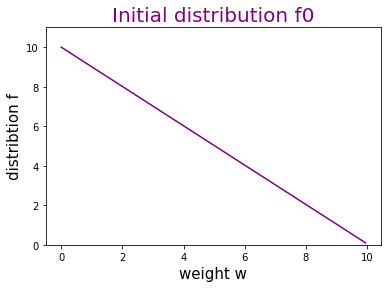}  
    %\caption{}
   % \label{CSa09_11}
\end{subfigure}
\hspace{-0.5cm}
\begin{subfigure}{.45\textwidth}
    \centering
    \includegraphics[width=.85\linewidth]{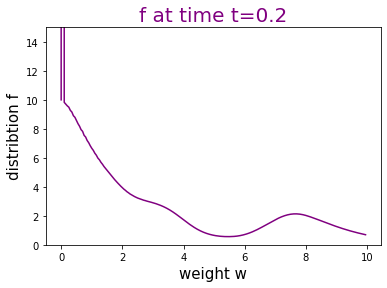}  
    %\caption{}
    %\label{CSa09_2}
\end{subfigure}

\begin{subfigure}{.45\textwidth}
    \centering
    \includegraphics[width=.85\linewidth]{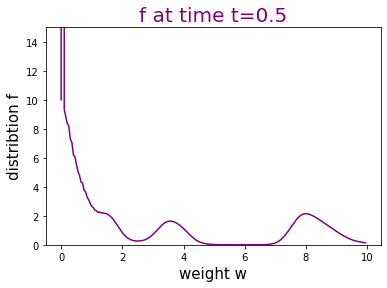}  
    %\caption{}
    %\label{CSa09_3}
\end{subfigure}
\hspace{-0.5cm}
\begin{subfigure}{.45\textwidth}
    \centering
    \includegraphics[width=.85\linewidth]{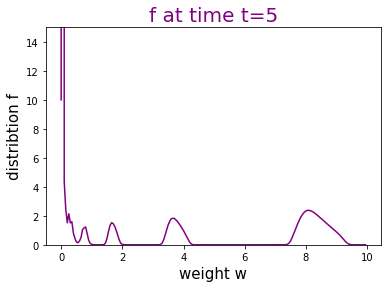}  
    %\caption{}
    %\label{CSa09_4}
\end{subfigure}
\caption{Simulations with compactly supported feeding preference function starting from linear initial conditions with parameters $\alpha=0.9$, $B=1.5$, $\sigma=0.3$, $K=0.1$ and $K'=0.01$, showing convergence to a steady-state representing the \emph{cascade-effect}.}
\label{SCa09}
\end{figure}

\begin{figure}[H]
\centering
\begin{subfigure}{.45\textwidth}
    \centering
    \includegraphics[width=.85\linewidth]{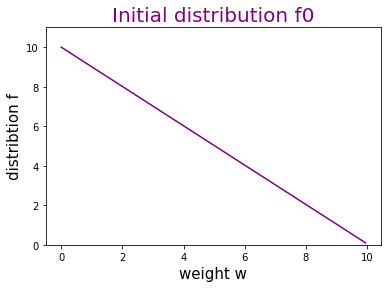}  
    %\caption{}
   % \label{CSa09_11}
\end{subfigure}
\hspace{-0.5cm}
\begin{subfigure}{.45\textwidth}
    \centering
    \includegraphics[width=.85\linewidth]{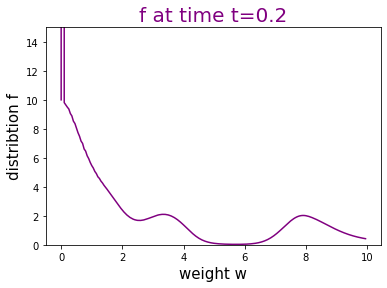}  
    %\caption{}
    %\label{CSa09_2}
\end{subfigure}

\begin{subfigure}{.45\textwidth}
    \centering
    \includegraphics[width=.85\linewidth]{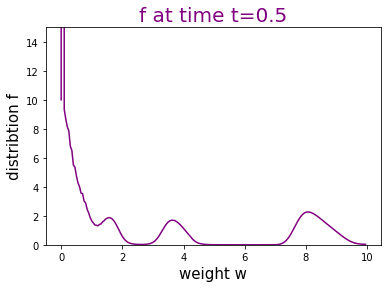}  
    %\caption{}
    %\label{CSa09_3}
\end{subfigure}
\hspace{-0.5cm}
\begin{subfigure}{.45\textwidth}
    \centering
    \includegraphics[width=.85\linewidth]{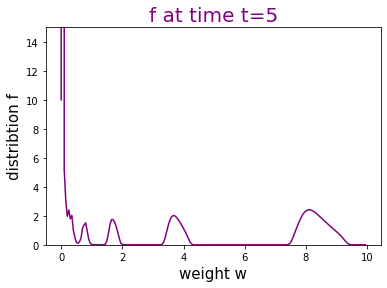}  
    %\caption{}
    %\label{CSa09_4}
\end{subfigure}
\caption{Simulations with compactly supported feeding preference function starting from linear initial conditions with parameters $\alpha=1.1$, $B=1.5$, $\sigma=0.3$, $K=0.1$ and $K'=0.01$, showing convergence to a steady-state representing the \emph{cascade-effect}.}
\label{SCa11}
\end{figure}

\begin{figure}[ht]
\centering
\begin{subfigure}{.45\textwidth}
    \centering
    \includegraphics[width=.85\linewidth]{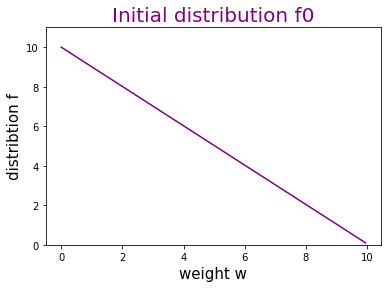}  
    %\caption{}
   % \label{CSa09_11}
\end{subfigure}
\hspace{-0.5cm}
\begin{subfigure}{.45\textwidth}
    \centering
    \includegraphics[width=.85\linewidth]{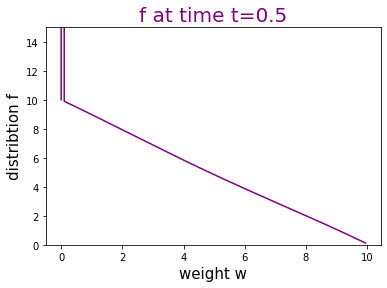}  
    %\caption{}
    %\label{CSa09_2}
\end{subfigure}

\begin{subfigure}{.45\textwidth}
    \centering
    \includegraphics[width=.85\linewidth]{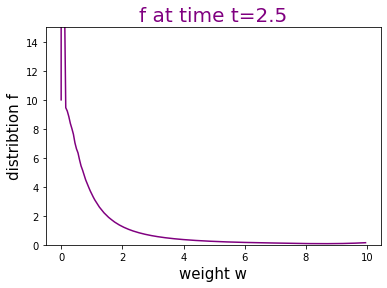}  
    %\caption{}
    %\label{CSa09_3}
\end{subfigure}
\hspace{-0.5cm}
\begin{subfigure}{.45\textwidth}
    \centering
    \includegraphics[width=.85\linewidth]{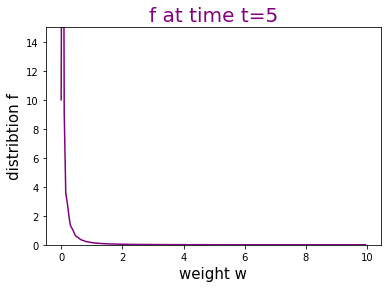}  
    %\caption{}
    %\label{CSa09_4}
\end{subfigure}
\caption{Simulations with compactly supported feeding preference function starting from linear initial conditions with parameters $\alpha=1.1$, $B=1.1$, $\sigma=0.3$, $K=0.1$ and $K'=0.01$, showing convergence to the trivial steady-state representing the \emph{extinction of all species}.}
\label{SCa11B11}
\end{figure}

\begin{figure}[ht]
\centering
\begin{subfigure}{.45\textwidth}
    \centering
    \includegraphics[width=.85\linewidth]{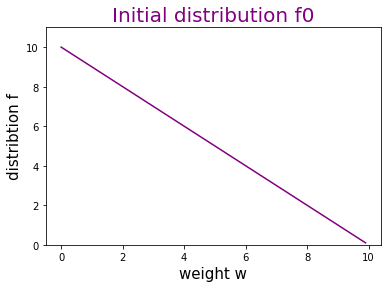}  
    %\caption{}
   % \label{CSa09_11}
\end{subfigure}
\hspace{-0.5cm}
\begin{subfigure}{.45\textwidth}
    \centering
    \includegraphics[width=.85\linewidth]{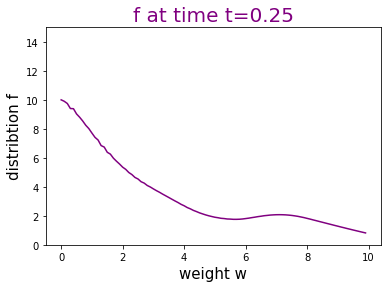}  
    %\caption{}
    %\label{CSa09_2}
\end{subfigure}

\begin{subfigure}{.45\textwidth}
    \centering
    \includegraphics[width=.85\linewidth]{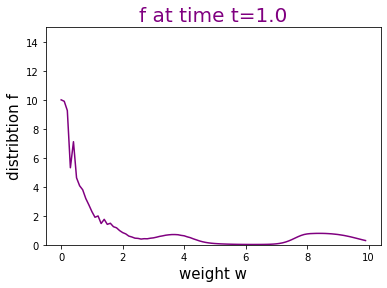}  
    %\caption{}
    %\label{CSa09_3}
\end{subfigure}
\hspace{-0.5cm}
\begin{subfigure}{.45\textwidth}
    \centering
    \includegraphics[width=.85\linewidth]{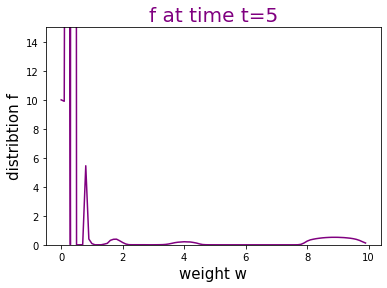}  
    %\caption{}
    %\label{CSa09_4}
\end{subfigure}
\caption{Simulations with compactly supported feeding preference function starting from linear initial conditions with parameters $\alpha=0.9$, $B=1.5$, $\sigma=0.3$, $K=0.4$ and $K'=0.01$, showing convergence to a steady-state representing the \emph{cascade-effect} with flat domes.}
\label{SCa09B15K04}
\end{figure}

\subsection{Numerical Simulations for the Gaußian Feeding Preference Function}

Although many of the analytical results for the model \eqref{e:PJG} with the Gaußian feeding preference function \eqref{d:sgauss} are not valid, with numerical simulations we are able to indicate some interesting behaviours.

In Figure \ref{SGa09B15} $\alpha=0.9$, $B=1.5$ and $\sigma=0.3$, $K=0.1$ and $K'=0.01$, hence the same parameters as for the simulation with the compactly supported feeding preference function in Figure \ref{SCa09}, where chosen. Unlike Figure \ref{SCa09}, convergence to the trivial steady can be observed. Although once observes that the solution first forms domes suggesting the convergence to the non-trivial steady state with gaps in the size-spectrum, before all mass is absorbed at 0. 

Condition \eqref{c:Bs} does not seem to be sufficient for the Gaußian feeding preference function. Convergence to a non-trivial equilibrium can be achieved also for the Gaußian case by choosing $B$ large enough and the variance $\sigma$ small enough, as it can be seen in Figure \ref{SGa09B2sig02} with $B=2$ and $\sigma=0.2$.

Clarifying the equilibrium conditions as well as a possible regime for metastability of the non-trivial equilibrium, suggested, e.g., by Figure \ref{SGa09B15}, for the Gaußian feeding preference function will be subject of further investigations.

\begin{figure}[ht]
\centering
\begin{subfigure}{.45\textwidth}
    \centering
    \includegraphics[width=.85\linewidth]{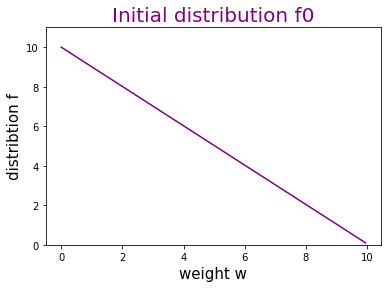}  
    %\caption{}
   % \label{CSa09_11}
\end{subfigure}
\hspace{-0.5cm}
\begin{subfigure}{.45\textwidth}
    \centering
    \includegraphics[width=.85\linewidth]{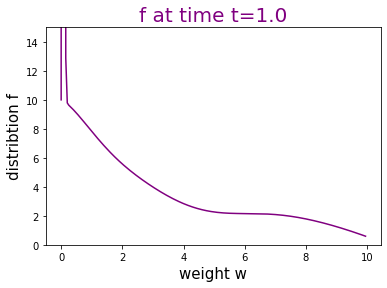}  
    %\caption{}
    %\label{CSa09_2}
\end{subfigure}

\begin{subfigure}{.45\textwidth}
    \centering
    \includegraphics[width=.85\linewidth]{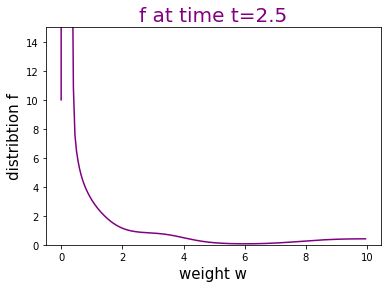}  
    %\caption{}
    %\label{CSa09_3}
\end{subfigure}
\hspace{-0.5cm}
\begin{subfigure}{.45\textwidth}
    \centering
    \includegraphics[width=.85\linewidth]{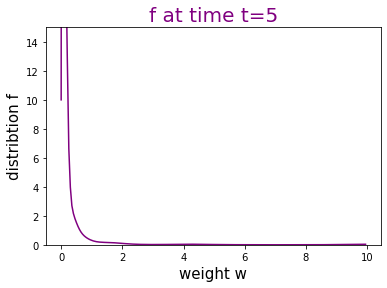}  
    %\caption{}
    %\label{CSa09_4}
\end{subfigure}
\caption{Simulations with Gaußian feeding preference function starting from linear initial conditions with parameters $\alpha=0.9$, $B=1.5$, $\sigma=0.3$, $K=0.1$ and $K'=0.01$, showing convergence to the trivial steady-state representing the \emph{extinction of all species}.}
\label{SGa09B15}
\end{figure}

\begin{figure}[ht]
\centering
\begin{subfigure}{.45\textwidth}
    \centering
    \includegraphics[width=.85\linewidth]{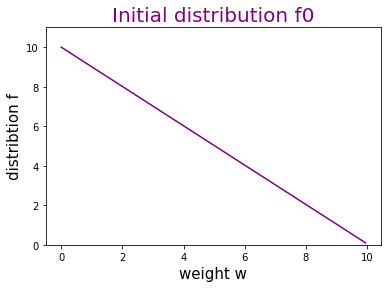}  
    %\caption{}
   % \label{CSa09_11}
\end{subfigure}
\hspace{-0.5cm}
\begin{subfigure}{.45\textwidth}
    \centering
    \includegraphics[width=.85\linewidth]{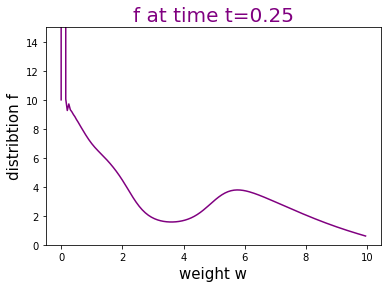}  
    %\caption{}
    %\label{CSa09_2}
\end{subfigure}

\begin{subfigure}{.45\textwidth}
    \centering
    \includegraphics[width=.85\linewidth]{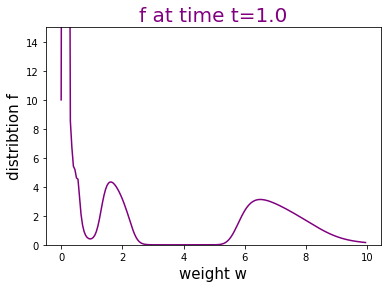}  
    %\caption{}
    %\label{CSa09_3}
\end{subfigure}
\hspace{-0.5cm}
\begin{subfigure}{.45\textwidth}
    \centering
    \includegraphics[width=.85\linewidth]{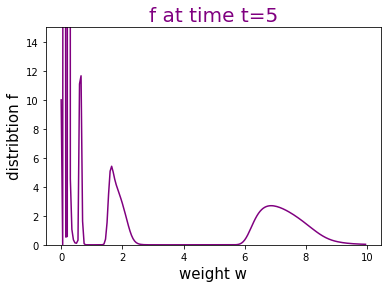}  
    %\caption{}
    %\label{CSa09_4}
\end{subfigure}
\caption{Simulations with Gaußian feeding preference function starting from linear initial conditions with parameters $\alpha=0.9$, $B=2$, $\sigma=0.2$, $K=0.1$ and $K'=0.01$, showing convergence to a steady-state representing the \emph{cascade-effect}.}
\label{SGa09B2sig02}
\end{figure}

\section{Conclusion and Outlook}\label{s:concl}

In order to investigate the time-evolution of the size-spectrum within an aquatic ecosystem, we proposed a model governed by a non-local integral equation with binary interaction describing predation events between two organisms in the ecosystem. Our proposed model extends the similar one introduced in \cite{DDL} by a term ensuring the \emph{conservation of biomass} within the ecosystem via creation of a certain amount of very small organisms at every predation event, which ensures re-utilisation of all materials within the ecosystem. While the model in its full generality admits that the size of the 'offspring' is drawn by a probability distribution, which depends on the prey size while having very small mean, we restricted our further considerations to the case where the size of the small individuals is deterministically given as a small fraction of the prey size. Our proposed equation provides a model for the dynamics within a closed or almost closed ecosystem, whose fragility can be seen due to the high impact outer influence has.

Analytical investigation of this deterministic model revealed several insights of the behaviour, confirmed and further investigated with numerical experiences. Most results are valid for the feeding preference function with compact support \eqref{d:scomp}, since this allows us to partly localize the integral operators defining the right-hand-side of \eqref{e:PJG}. Indeed, for such a compact feeding preference  $s(\cdot)$ we could show the existence of an interval $(1,m_*)$, such that for $m \in (1,m_*)$ the $m$-th moment of the distribution is non-increasing in time, which in a next step ensured global in time existence of solutions in the $w$-weighted $L^1$-space if $\alpha \in [1,m_*]$. For $\alpha < 1$ we provide a local-in time existence result. Although global in time existence is limited to a parameter regime with search-volume exponent $\alpha \in (1,m_*)$ while literature suggests an $\alpha$ lower than 1 \cite{W}, numerical simulations indicated that the choice of $\alpha$, as long as sufficiently chosen of order 1, does not have significant impact on the qualitative behaviour of the solution (compare Figures \ref{SCa09} and \ref{SCa11}). 

Moreover, analytical investigations revealed the occurrence of a trivial steady state, representing a \emph{died-out ecosystem} and given by a distribution having all the mass concentrated at $w=0$. Convergence to this steady state could be shown for the case where the feeding preference function \eqref{d:scomp} with compact support allows predation on organisms of the same size as the predating individual, which is supported by numerical tests (see Figure \ref{SCa11B11}). Necessary conditions for existence of a non-trivial steady state representing the aforementioned \emph{cascade effect} could be derived for the case of the compactly supported feeding-preference function and convergence of the size-distribution function to such \emph{dome patterns} could be observed in numerical experiments, as long as predation on species with the same size is not allowed (see Figure \ref{SCa09}). Although such cascade effects are widely observed in nature, it usually describes a temporary steady state. In the contrary, this theoretical setup forces the size-spectrum of the ecosystem to stay unchanged due to the lack of predation possibilities within the species, while lack of starvation and natural death rate on the remaining individuals means abundances stay constant rather than decrease.

Numerical tests suggest that most of the analytical results for the feeding preference function with compact support are also valid for the Gaußian feeding preference function, although just in a restricted parameter-regime. For large enough preferred feeding ratio and small variance convergence to dome patterns can be observed (Figure \ref{SGa09B2sig02}), while convergence to the trivial steady state also occurs in the Gaussian case (Figure \ref{SGa09B15}), although for the same set of parameters we observe the cascade-effect for the compactly supported feeding-preference function. In addition to this, we investigated the possibility of a power-law steady state and indeed found that equation \eqref{e:PJG} admits one of such, with power $-\frac{3+\alpha}{2}$, which fits to previous investigations \cite{BR, DDL, DDLP, SPS}. Since due to its violation of the conservation law it is not a feasible equilibrium of our model, which is coherent with the observation that such power-laws are usually found after processing large quantity of data from huge ecosystems.

The findings within this article naturally paved the way to perform further investigations both from an analytical side as well as from an observational point of view. For the latter, fully identifying the asymptotic behaviour for the Gaußian feeding preference function, much used and studied in existing literature \cite{BR, DDL, DDLP}, is of high interest. Furthermore, a task of severe importance is to perform a full characterisation of the dome-patterns emerging as a non-trivial steady state followed by a stability analysis of the model presented here in dependence of the model parameters. Indeed, while we were already able to answer the ecologically relevant question of the location and size of the gaps, to characterise the shapes of the domes in dependence of the parameters remained an open problem, while comparing Figure \ref{SCa09} and \ref{SCa09B15K04} already suggested that their hight and width are depending on the \emph{assimilation efficiency}. Comparing our findings to - as well as calibrating the model with - data could help in a next step to predict the influence outer impacts have on an ecosystem. 

\newpage
\vspace{1cm}
\paragraph{Acknowledgements:} We thank \emph{Nicolas Loeuille} and \emph{Vincent Calvez} for fruitful discussions, providing helpful insights and broadening our view on the existing literature. We also thank the careful referees for pointing out several improvements of a first version.

 L.K. received funding by a grant from the FORMAL team at ISCD - Sorbonne Université and by the European Commission under the Horizon2020 research and innovation programme, Marie Sklodowska-Curie grant agreement No 101034255. B.P. has received funding from the European Research Council (ERC) under the European Union Horizon 2020 research and innovation programme (grant agreement No 740623).
 
\includegraphics[width=.1\linewidth, right]{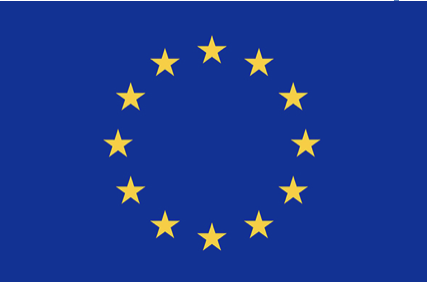}
\paragraph{Declarations Competing Interests:} The authors certify that they do not have any conflict of interest.
\vspace{-12pt}
\paragraph{Data availability :} Does not apply.
\vspace{-12pt}
\paragraph{Code availability :} On request.
%\vspace{0.5cm}

%\newpage

\end{document}